 \newtheorem{thm}{Theorem}[section]
 \newtheorem{cor}[thm]{Corollary}
 \newtheorem{lem}[thm]{Lemma}
 \newtheorem{prop}[thm]{Proposition}
 \theoremstyle{definition}
 \newtheorem{ex}[thm]{Example}
 \newtheorem{rem}[thm]{Remark}
 \newtheorem*{finrem}{Final remark}
\newenvironment{enum}{\parindent0pt%
\begin{list}{}{%
\setlength{\itemindent}{0ex}
\setlength{\labelwidth}{15pt}
\setlength{\labelsep}{6pt}
\setlength{\leftmargin}{21pt}
\setlength{\listparindent}{0pt}
\setlength{\itemsep}{0ex}
\setlength{\topsep}{0ex}
\setlength{\parsep}{0.2em} 
}
}{\end{list}}
\newcommand{\bZ}{\mathbb Z}
\newcommand{\bR}{\mathbb R}
\newcommand{\sbR}{{}^*{\mathbb R}}
\newcommand{\sbZ}{{}^*{\mathbb Z}}
\newcommand{\sbN}{{}^*{\mathbb N}}
\newcommand{\FsR}{{\mathbb F}^*{\mathbb R}}
\newcommand{\IsR}{{\mathbb I}^*{\mathbb R}}
\newcommand{\FL}{{\mathbb F}{\hskip.3pt}L}
\newcommand{\IL}{{\mathbb I}{\hskip.3pt}L}
\newcommand{\FG}{{\mathbb F}{\hskip.4pt}G}
\newcommand{\IG}{{\mathbb I}{\hskip.4pt}G}
\newcommand{\Ast}{{}^{*\!}A}
\newcommand{\Bst}{{}^{*\!}B}
\newcommand{\Lam}{\varLambda}
\newcommand{\eps}{\varepsilon}
\newcommand{\bbeta}{\boldsymbol{\beta}}
\newcommand{\ao}{{}^{\circ}a}
\newcommand{\bo}{{}^{\circ}b}
\newcommand{\ds}{{}^{\circ}d}
\newcommand{\xo}{{}^{\circ}x}
\newcommand{\yo}{{}^{\circ}y}
\newcommand{\uo}{{}^{\circ}u}
\newcommand{\Ao}{{}^{\circ\!}A}
\newcommand{\Bo}{{}^{\circ\!}B}
\newcommand{\Co}{{}^{\circ}C}
\newcommand{\Do}{{}^{\circ\!}D}
\newcommand{\Lo}{{}^{\circ}L}
\newcommand{\Po}{{}^{\circ\!}P}
\newcommand{\Qo}{{}^{\circ}Q}
\newcommand{\sbs}{\subseteq}
\newcommand{\sms}{\smallsetminus}
\newcommand{\nin}{\notin}
\newcommand{\cd}{\cdot}
\newcommand{\cx}{\times}
\newcommand{\co}{\circ}
\newcommand{\lv}{\left|}
\newcommand{\rv}{\right|}
\newcommand{\LV}{\left\|}
\newcommand{\RV}{\right\|}
\newcommand{\iso}{\cong}
\newcommand{\apr}{\approx}
\newcommand{\napr}{\not\approx}
\newcommand{\nll}{\not\!\ll}
\renewcommand{\:}{\colon}
\newcommand{\Iff}{\ \Leftrightarrow\ }
\newcommand{\IFF}{\ \Longleftrightarrow\ }
\newcommand{\all}{\forall\,}
\newcommand{\exs}{\exists\,}
\newcommand{\spn}{\operatorname{span}}
\newcommand{\sspn}{{}^*\!\operatorname{span}}
\newcommand{\grp}{\operatorname{grp}}
\newcommand{\sgrp}{{}^*\!\operatorname{grp}}
\newcommand{\rank}{\operatorname{rank}}
\newcommand{\rankf}{\rank_{\mathrm{f}}}
\newcommand{\Anz}{\operatorname{Ann}_{\bZ}}
\newcommand{\Ansz}{\operatorname{Ann}_{{}^*\bZ}}
\newcommand{\Ker}{\operatorname{Ker}}
\newcommand{\T}{\mathrm{T}}
\begin{document}
%
%
%
%
%
\title[A uniform stability principle for dual lattices]%
{A uniform stability principle for dual lattices}

\author[M.~Vodi\v{c}ka and P.~Zlato\v{s}]%
{Martin Vodi\v{c}ka and Pavol Zlato\v{s}}

\address{%
{\sl Martin Vodi\v{c}ka and Pavol Zlato\v{s}}
\newline\indent
{\sl Faculty of Mathematics, Physics and Informatics}
\newline\indent
{\sl Comenius University}
\newline\indent
{\sl Mlynsk\'a dolina}
\newline\indent
{\sl 842\,48~Bratislava}
\newline\indent
{\sl Slovakia}
\newline\indent
{\rm vodicka6@uniba.sk}
\newline\indent
{\rm zlatos@fmph.uniba.sk}}

\keywords{Lattice, dual lattice, stability, ultraproduct,
nonstandard analysis}

\subjclass[2010]{Primary 11H06; Secondary 11H31, 11H60, 03H05}

\thanks{Research of the second author supported by the grant
no.~1/0333/17 of the Slovak grant agency VEGA}

\begin{abstract}
We prove a highly uniform stability or ``almost-near'' theorem for dual lattices of
lattices $L \sbs \bR^n$. More precisely, we show that, for a vector $x$ from the linear
span of a lattice $L \sbs \bR^n$, subject to $\lambda_1(L) \ge \lambda > 0$, to be
$\eps$-close to some vector from the dual lattice $L'$ of \,$L$, it is enough that
the inner products $u\,x$ are $\delta$-close (with $\delta < 1/3$) to some integers for
all vectors $u \in L$ satisfying $\LV u \RV \le r$, where $r > 0$ depends on $n$,
$\lambda$, $\delta$ and $\eps$, only. This generalizes an analogous result proved for
integral lattices in \cite{MZ}. The proof is nonconstructive, using the ultraproduct
construction and a slight portion of nonstandard analysis.
\end{abstract}

\maketitle


\noindent
Informally, a property of objects of certain kind is ``stable'' if objects ``almost
satisfying'' this property are already ``close'' to objects having the property. For that
reason results establishing such a stability are frequently referred as a ``almost-near''
principles or theorems. Making precise the vague notions ``almost satisfying'' and
``close'' various rigorous notions of stability can be obtained. The study of stability
of functional equations originates from a question about the stability of additive
functions $\bR \to \bR$ and, more generally, of homomorphisms $G \to H$ between
metrizable topological groups, asked by Ulam, cf.~\cite{Mu}, \cite{U1}, \cite{U2}.
Since that time Ulam's type stability, modified in various ways, was studied for various
(systems of) functional equations\,---\,see, e.g., Rassias \cite{Ra}, Sz\'{e}kelyhidi
\cite{Sk}. A~systematic and general approach to this topic in the realm of compact
Hausdorff topological spaces, using nonstandard analysis was developed by Anderson
\cite{An}. The study of stability of the homomorphy property with respect to the
compact-open  topology was commenced by the second of the present authors \cite{Z1},
\cite{Z2}, \cite{Z3}. The survey article by Boualem and Brouzet \cite{BB} reflects some
recent development.

In the present paper we will prove the stability theorem for dual lattices stated in the
abstract, as well as some closely related results. Typically, such a stability result
would be formulated in a weaker form, namely that every vector $x$ from the linear span
of \,$L$, behaving almost like a vector from the dual lattice $L'$ of \,$L$ in the
sense that all its inner products $u\,x = u_1 x_1 + \ldots + u_n x_n$ with vectors $u$
from a ``sufficiently big'' subset of \,$L$ are ``sufficiently close'' to some integer,
is already ``arbitrarily close'' to a vector $y \in L'$. Below is the precise
formulation arising from this account. In it $\spn(L)$ denotes the linear subspace of
\,$\bR^n$ generated by $L$,
$\lv a\rv_\bZ = \min_{c \in \bZ}\lv a - c\rv =
   \min\bigl(a - \lfloor a \rfloor, \lceil a\rceil - a\bigr)$
denotes the distance of the real number $a$ from the set of all integers $\bZ$,
and $\LV x\RV = \sqrt{x\,x}$ is the euclidean norm, induced by the usual inner
(scalar) product $x\,y$ on $\bR^n$.

\begin{thm}\label{prelim}
Let $L \sbs \bR^n$ be a lattice. Then, for each $\eps > 0$, there exist
$\delta > 0$ and $r > 0$ such that for every $x \in \spn(L)$, satisfying
$\lv u\,x\rv_\bZ \le \delta$ for all $u \in L$, $\LV u \RV \le r$, there is
a $y \in L'$ such that $\LV x - y\RV \le \eps$.
\end{thm}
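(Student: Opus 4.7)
\medskip

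\noindent\textbf{Proof plan.} I would argue by contradiction, exploiting an ultraproduct/nonstandard extension as foreshadowed in the abstract. Assume the theorem fails for some $\eps > 0$: then for every positive integer $k$ one can pick $x_k \in \spn(L)$ with $\lv u\,x_k \rv_\bZ \le 1/k$ for all $u \in L$ satisfying $\LV u \RV \le k$, while still $\LV x_k - y\RV > \eps$ for every $y \in L'$.

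The first step is a bounding reduction. Replacing $x_k$ by $x_k - y_k$ for any $y_k \in L'$ preserves both the hypothesis (since $u\,y_k \in \bZ$ for $u \in L$) and the failed conclusion (since $L' - y_k = L'$). Choosing $y_k$ to be a nearest point of $L'$ to $x_k$, one may therefore assume that all $x_k$ lie in a fixed compact fundamental domain $F \sbs \spn(L)$ for the $L'$-translation action, and hence form a bounded sequence.

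Now pass to a nonstandard enlargement and let $x \in \sbR^n$ be the internal element represented by the sequence $(x_k)$. Boundedness of $F$ forces $x$ to be a finite vector in $\sspn(L)$, so it admits a standard part $\xo \in \spn(L)$ with $x \apr \xo$. For each standard $u \in L$ the bound $\LV u \RV \le k$ eventually holds along the ultrafilter, so by \L{}o\'s's theorem $\lv u\,x \rv_{\sbZ}$ is infinitesimal; taking standard parts gives $u\,\xo \in \bZ$. Since $u \in L$ was arbitrary, this forces $\xo \in L'$.

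The contradiction is then immediate: as $\xo$ is standard and $x \apr \xo$, the inequality $\LV x_k - \xo \RV \le \eps$ holds for ultrafilter-many indices $k$, contradicting $\LV x_k - y\RV > \eps$ for all $y \in L'$. I expect the main obstacle to be precisely the fundamental-domain reduction, since without a uniform bound on the $x_k$ the ultrapower witness need not admit a standard part, and the rest of the argument would collapse; once that reduction is in place, the nonstandard machinery does the work automatically.
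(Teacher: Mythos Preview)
Your proof is correct. The paper does not prove Theorem~\ref{prelim} directly; it establishes instead the much stronger uniform Theorem~\ref{stab-dual-latt-st}, in which $r$ depends only on $n$, $\delta$, $\eps$ and a lower bound $\lambda$ for $\lambda_1(L)$, and Theorem~\ref{prelim} then follows as the special case $\lambda = \lambda_1(L)$. Because the lattice there varies, the paper has to form an ultraproduct of \emph{different} lattices $L_r$, obtaining a genuinely internal lattice whose standard part need not be any prescribed standard $L$; this is what forces the machinery of Sections~\ref{2}--\ref{4} (Minkowski reduced bases of internal lattices, Proposition~\ref{stab-linalg}, Theorems~\ref{stand-part-annih} and~\ref{stab-dual-latt-ns}, and the $\delta < 1/3$ trick of Lemma~\ref{delta-apr0}). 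Your argument exploits the fact that $L$ is fixed: the relevant internal object is simply ${}^*L$, its standard part is $L$ itself, and the problem collapses to a compactness argument that could just as well be run with a convergent subsequence in place of the ultrapower. Both routes share the covering-radius reduction you flag (translating each $x_k$ by an element of $L'$ into a bounded set); the paper invokes it via Lemma~\ref{cov-rad-qual} in the proof of Theorem~\ref{stab-dual-latt-st}. The payoff of the paper's longer route is the uniformity in $L$; your shortcut is entirely adequate---and more elementary---for Theorem~\ref{prelim} as stated, but would not yield the uniform version.
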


Such a statement naturally raises the question how the parameters $\delta$ and $r$
depend on the parameters $n$ and $\eps$ and some properties of the lattice $L$. We,
in fact, will prove a stronger and more uniform result, answering partly this question.
Namely, we will show that one can pick any $\delta \in (0,1/3)$; then $r$ can be chosen
depending on $n$, $\eps$, $\delta$ and, additionally, the Minkowski first successive
minimum $\lambda_1(L)$. The precise formulation is given in Theorem~\ref{stab-dual-latt-st}.
On the other hand, as the proof of this result uses the ultraproduct construction, it
establishes the mere existence of such an $r$, without any estimate of its size.

Theorem~\ref{stab-dual-latt-st} generalizes an analogous result proved in \cite{MZ} for
integral lattices, replacing the condition $L \sbs \bZ^n$ by introducing an additional
parameter $\lambda > 0$ and requiring $\lambda_1(L) \ge \lambda$. The mentioned result
in \cite{MZ} was obtained as a byproduct of a stability result for characters of
countable abelian groups the proof of which used Pontryagin-van\,Kampen duality between
discrete and compact groups and the ultraproduct construction. Our present result is
based on an intuitively appealing almost-near result (Theorem~\ref{stab-dual-latt-ns})
formulated in terms of nonstandard analysis which is linked to its standard counterpart
(Theorem~\ref{stab-dual-latt-st}) via the ultraproduct construction applied to a sequence
of lattices. As a consequence, Pontryagin-van\,Kampen duality is eliminated from the
proof. Additionally, the passage from stability of characters to stability of dual lattices
in \cite{MZ} naturally led to a formulation in terms of the pair of mutually dual norms
$\LV x\RV_1 = \lv x_1\rv + \ldots + \lv x_n\rv$ and
$\LV x\RV_\infty = \max(\lv x_1\rv,\dots,\lv x_n\rv)$. In our present work, starting
right away from lattices, the (equivalent) formulation in terms of the (selfdual)
euclidean norm $\LV x\RV = \LV x\RV_2$ seems more natural.

\section{Lattices and dual lattices}\label{1}

\noindent
We assume some basic knowledge of lattices or, more generally, of ``geometry of numbers''.
The readers can consult, e.g., Cassels \cite{C}, Gruber, Lekkerkerker \cite{GL} or
Lagarias \cite{L2}; however, for their convenience we list here the definitions of most
notions we use and some facts we build on.

A subgroup $L$ of the additive group $\bR^n$, where $n \ge 1$, is called a \textit{lattice}
if it is discrete, i.e., there is a $\lambda > 0$ such that $\LV x - y\RV \ge \lambda$ for
any distinct vectors $x,y \in L$. $\bR^n$ is alternatively viewed as a vector space or an
affine space and its elements as vectors or points, respectively. The dimension of the linear
space $\spn(X)$ generated by a set $X \sbs \bR^n$ is called the \textit{rank} of \,$X$, i.e.,
$\rank(X) = \dim\spn(X)$. A \textit{full rank lattice} is a lattice of rank equal the
dimension of the ambient space $\bR^n$. A \textit{body} is a nonempty bounded connected
set $C \sbs \bR^n$ which equals the closure of its interior. A body $C$ is called
\textit{centrally symmetric} if \,$-x \in C$ for any $x \in C$; it is called \textit{convex}
if \,$ax + (1-a)y \in C$ for any $x,y \in C$ and $a \in [0,1]$. An example of a centrally
symmetric convex body is the euclidean unit ball $B = \{x \in \bR^n: \LV x\RV \le 1\}$.
The \textit{Minkowski successive minima} of \,$L$ (with respect to the unit ball $B$) are
defined by
$$
\lambda_k(L) = \inf\{\lambda \in \bR: \lambda > 0,\ \rank(L \cap \lambda B) \ge k\}
$$
for $1 \le k \le \rank(L)$. In particular,
$\lambda_1(L) = \inf\{\LV x\RV: 0 \ne x \in L\}$. The \textit{covering radius} of \,$L$
is defined by
$$
\mu(L) = \inf\{r \in \bR: r > 0,\ \spn(L) \sbs L + r B\}\,.
$$
In all these cases the infima are in fact minima.

A \textit{basis} of a lattice $L \sbs \bR^n$ is an ordered $m$-tuple
$\bbeta = (v_1,\dots,v_m)$ of linearly independent vectors from $L$ which generate $L$
as a group, i.e.,
$$
L = \grp(v_1,\dots,v_m) = \{c_1 v_1 + \ldots + c_m v_m: c_1,\dots, c_m \in \bZ\}\,.
$$
Obviously, in such a case $\rank(L) = m$. In the proof of the fact that every lattice has
a basis the following elementary lemma, to which we will refer within short, plays a key
role.

\begin{lem}\label{basis-ext}
Let $L \sbs \bR^n$ be a lattice of rank $m$ and $(v_1,\dots,v_k)$, with $k < m$, be an
ordered $k$-tuple of linearly independent vectors from $L$ which can be extended to
a basis of \,$L$. Denote $V = \spn(v_1,\dots,v_k)$ and assume that the vector
$v_{k+1} \in L \sms V$ has a minimal (euclidean) distance to the linear subspace $V$
from among all the vectors in $L \sms V$. Then the $(k+1)$-tuple $(v_1,\dots,v_k,v_{k+1})$
either is already a basis of \,$L$ (if \,$k+1 = m$) or it can be extended to a basis of
\,$L$ (if \,$k + 1 < m$).
\end{lem}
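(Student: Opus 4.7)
The plan is to reduce the lemma to the classical fact that every primitive vector of a free abelian group $\bZ^r$ (i.e., one not divisible by any integer $\ge 2$) can be completed to a $\bZ$-basis of $\bZ^r$. First, I would observe that the hypothesis that $(v_1,\ldots,v_k)$ extends to a basis of $L$ forces $L \cap V = \grp(v_1,\ldots,v_k)$: writing any element of $L \cap V$ in an extending basis, the coefficients of the basis vectors lying outside $V$ must vanish, since those vectors project to a linearly independent system in the orthogonal complement $V^\perp$. Consequently the quotient group $\bar L = L/(L \cap V)$ is free abelian of rank $m - k$, and the distance of any vector $v \in L$ to $V$ equals $\LV \pi(v) \RV$, where $\pi \: \bR^n \to V^\perp$ denotes orthogonal projection; moreover $\pi$ descends to an injective homomorphism $\bar L \to V^\perp$.

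The central step is to verify that the image $\bar v_{k+1} \in \bar L$ of the minimizer $v_{k+1}$ is a primitive element of $\bar L$. Suppose to the contrary that $\bar v_{k+1} = d\,\bar u$ for some $u \in L$ and integer $d \ge 2$. Then $v_{k+1} - du \in L \cap V$, and hence $\pi(v_{k+1}) = d\,\pi(u)$. Since $v_{k+1} \nin V$, we have $\pi(v_{k+1}) \ne 0$, so $\pi(u) \ne 0$ and $u \in L \sms V$; but then $\LV \pi(u) \RV = \LV \pi(v_{k+1}) \RV / d < \LV \pi(v_{k+1}) \RV$, contradicting the minimality of the distance of $v_{k+1}$ to $V$ among the vectors of $L \sms V$.

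To conclude, the cited classical fact provides elements of $\bar L$ whose images together with $\bar v_{k+1}$ form a $\bZ$-basis of $\bar L$ (or, when $m - k = 1$, shows that $\bar v_{k+1}$ already generates $\bar L$). Lifting these elements through the canonical projection $L \to \bar L$ and adjoining $v_1,\ldots,v_{k+1}$ produces the required basis of $L$; in the case $k + 1 = m$ no lifting is needed and $(v_1,\ldots,v_k,v_{k+1})$ is itself a basis. The only point requiring real care is the primitivity argument in the middle paragraph; the rest is standard bookkeeping with quotients of lattices and orthogonal projections.
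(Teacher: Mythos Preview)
Your argument is correct. The reduction to the quotient $\bar L = L/(L\cap V)$, the verification that $L\cap V = \grp(v_1,\dots,v_k)$ via the extending basis, the primitivity of $\bar v_{k+1}$ from the minimal-distance hypothesis, and the lifting step all go through as written. One small remark: in the final lifting you should perhaps say explicitly that the resulting $m$ vectors are linearly independent (e.g., because their images under the composite $L \to \bar L \hookrightarrow V^\perp$ together with $v_1,\dots,v_k$ span an $m$-dimensional space), but this is routine.

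As for comparison with the paper: there is nothing to compare, since the paper does not supply a proof of this lemma. It is presented there as an ``elementary lemma'' that ``plays a key role'' in the standard proof that every lattice has a basis, and is simply quoted as background. Your quotient-and-primitivity argument is one of the standard ways to establish this fact; another common route avoids the appeal to the primitive-vector theorem by arguing directly that if some $w\in L$ were not in $\grp(v_1,\dots,v_{k+1}) + (L\cap V^{(k+1)})$ (with $V^{(k+1)} = \spn(v_1,\dots,v_{k+1})$ understood appropriately), one could subtract integer multiples of $v_1,\dots,v_{k+1}$ to produce a vector in $L\sms V$ strictly closer to $V$ than $v_{k+1}$. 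The two approaches are essentially equivalent; yours packages the combinatorics into the cited classical fact about $\bZ^r$, which is perfectly legitimate.
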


Another useful consequence of the fact that every lattice has a basis is the following

\begin{lem}\label{integer-indep}
Let $L \sbs \bR^n$ be a lattice. Then a $k$-tuple of vectors $v_1,\dots,v_k \in L$ is
linearly independent if and only if, for any {\rm integers} $c_1,\dots,c_k \in \bZ$,
the equality $c_1 v_1 + \ldots + c_k v_k = 0$ implies $c_1 = \ldots = c_k = 0$.
\end{lem}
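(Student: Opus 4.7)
The forward direction is immediate, since $\mathbb{R}$-linear independence of vectors trivially implies their $\mathbb{Z}$-linear independence: any integer relation is in particular a real relation.

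For the reverse direction I would argue by contrapositive: assume that $v_1, \dots, v_k$ are $\mathbb{R}$-linearly dependent and produce a nontrivial \emph{integer} relation among them. The key observation is that the subgroup $L' = \grp(v_1,\dots,v_k)$ generated by these vectors inside $L$ is itself a discrete subgroup of $\bR^n$, hence a lattice. Indeed, discreteness is inherited from $L$, because the separation constant $\lambda$ witnessing discreteness of $L$ also witnesses discreteness of any subgroup. The rank of $L'$ equals $\dim\spn(v_1,\dots,v_k)$, which by our hypothesis is some $r < k$.

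By the fact cited just before the lemma (every lattice has a basis), $L'$ admits a basis $(u_1,\dots,u_r)$ of linearly independent vectors with $r < k$. Each $v_i$ is then expressible as $v_i = \sum_{j=1}^{r} a_{ij}\, u_j$ for some $a_{ij} \in \bZ$. The vectors $a_i = (a_{i1},\dots,a_{ir}) \in \bZ^r$, $i = 1,\dots,k$, are $k$ elements of the free abelian group $\bZ^r$ of rank $r < k$; hence they are $\bZ$-linearly dependent (equivalently, $\bQ$-linearly dependent in $\bQ^r$, which gives a $\bZ$-relation after clearing denominators). Pulling such a nontrivial relation $c_1 a_1 + \dots + c_k a_k = 0$ back through the basis $(u_1,\dots,u_r)$ yields $c_1 v_1 + \dots + c_k v_k = 0$ with integers $c_i$ not all zero, contradicting the assumed $\bZ$-linear independence.

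The only potentially delicate point is confirming that $\grp(v_1,\dots,v_k)$ really is a lattice so that the ``every lattice has a basis'' theorem applies; this reduces to the trivial observation that any subgroup of a discrete subgroup of $\bR^n$ is again discrete. Once this is in hand the argument is purely formal, and the rank-pigeonhole step on $\bZ^r$ does the actual work.
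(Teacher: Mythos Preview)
Your proof is correct and follows exactly the line the paper indicates: the lemma is stated there as ``another useful consequence of the fact that every lattice has a basis'' without further detail, and your argument supplies precisely those details by applying the basis theorem to the sublattice $\grp(v_1,\dots,v_k)$ and then invoking a rank count in $\bZ^r$. One minor remark: avoid the notation $L'$ for this sublattice, since the paper reserves $L'$ for the dual lattice.
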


A basis $(v_1,\dots,v_m)$ of a lattice $L$ is \textit{Minkowski reduced} if, for each
$k \le m$, $v_k$ is the shortest vector from $L$ such that the $k$-tuple $(v_1,\dots,v_k)$
can be extended to a basis of \,$L$. It is known that every lattice has a Minkowski reduced
basis.

For any subset $S \sbs \bR^n$ we denote by
$$
\Anz(S) = \{x \in \bR^n: \all u \in S\: u\,x \in \bZ\}
$$
the \textit{integral annihilator} of \,$S$. Obviously, $\Anz(S)$ is a subgroup of \,$\bR^n$
for every $S \sbs \bR^n$, however, even for a lattice $L \sbs \bR^n$, the integral annihilator
$\Anz(L)$ need not be a lattice, unless $\rank(L) = n$. The \textit{dual lattice} of \,$L$
(also called the \textit{polar} or \textit{reciprocal lattice}) is defined as the intersection
$$
L' = \Anz(L) \cap \spn(L)\,.
$$
Then $L'$ is a lattice in $\bR^n$ of the same rank as $L$ and there is an obvious
duality relation $L'' = L$. The Minkowski successive minima of the original
lattice $L$ and its dual lattice $L'$ are related through a bound due to Banaszczyk
\cite{Bn}. Similarly, the covering radius of the dual lattice $L'$ can be estimated
in terms of the first Minkowski minimum of \,$L$\,---\,see Lagarias, Lenstra,
Schnorr~\cite{LLS}. Actually, in the quoted papers these results were stated and proved
for full rank lattices, i.e., in case $m = n$, only. However, introducing an orthonormal
basis in the linear subspace $\spn(L)$ and replacing any vector $x \in \spn(L)$ by its
coordinates with respect to it, they can be readily generalized as follows.

\begin{lem}\label{cov-rad}
Let $L \sbs \bR^n$ be a lattice of rank $m$. Then
$$
\lambda_k(L)\,\lambda_{m-k+1}\bigl(L'\bigr) \le m
$$
for each $k \le m$, and
$$
\lambda_1(L)\,\mu\bigl(L'\bigr) \le \frac{1}{2}\,m^{3/2}\,.
$$
\end{lem}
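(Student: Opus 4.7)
The plan is to reduce to the full rank case, which is exactly how the author hints at the proof. Set $V = \spn(L)$, so $V$ is an $m$-dimensional linear subspace of $\bR^n$, pick an orthonormal basis $e_1,\dots,e_m$ of $V$, and consider the map $\varphi\: V \to \bR^m$ defined by $\varphi(x) = (x\,e_1,\dots,x\,e_m)$. Since $e_1,\dots,e_m$ are orthonormal, $\varphi$ is a linear isometry between $V$ (with the restricted euclidean norm) and $\bR^m$ (with its standard euclidean norm), and in particular $\varphi$ preserves inner products of pairs of vectors from $V$.

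Next I would check that $\varphi$ intertwines the duality operation. Writing $\widetilde{L} = \varphi(L)$, this is a full rank lattice in $\bR^m$. By definition $L' = \Anz(L) \cap V$, so $y \in L'$ iff $y \in V$ and $u\,y \in \bZ$ for all $u \in L$. Since $\varphi$ preserves inner products on $V$, the condition $u\,y \in \bZ$ is equivalent to $\varphi(u)\,\varphi(y) \in \bZ$, so $\varphi(L') = \widetilde{L}'$, where $\widetilde{L}'$ denotes the (full rank) dual of $\widetilde{L}$ in $\bR^m$. Because $\varphi$ is a linear isometry, the Minkowski successive minima and the covering radius are preserved:
\[
\lambda_k(L) = \lambda_k\bigl(\widetilde{L}\bigr), \qquad
\lambda_k\bigl(L'\bigr) = \lambda_k\bigl(\widetilde{L}'\bigr), \qquad
\mu\bigl(L'\bigr) = \mu\bigl(\widetilde{L}'\bigr),
\]
since each of these quantities is defined purely in terms of norms of lattice vectors, ranks of intersections with balls, and distances to $V$ (respectively to $\bR^m$).

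Once this reduction is in place, both inequalities follow immediately from the known full rank results. For the first, Banaszczyk's transference theorem~\cite{Bn} applied to the full rank lattice $\widetilde{L} \sbs \bR^m$ yields $\lambda_k\bigl(\widetilde{L}\bigr)\,\lambda_{m-k+1}\bigl(\widetilde{L}'\bigr) \le m$ for $1 \le k \le m$. For the second, the bound of Lagarias, Lenstra and Schnorr~\cite{LLS} applied to $\widetilde{L}$ gives $\lambda_1\bigl(\widetilde{L}\bigr)\,\mu\bigl(\widetilde{L}'\bigr) \le \tfrac{1}{2}\,m^{3/2}$. Substituting the equalities above transfers the bounds from $\widetilde{L}$ back to $L$.

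The only genuinely delicate point is the verification that the isometric identification $\varphi$ really does carry the intrinsic dual $L'$ (which is defined using the ambient inner product on $\bR^n$) onto the full rank dual $\widetilde{L}'$ of $\widetilde{L}$ in $\bR^m$; the rest is a direct appeal to the cited theorems. That verification, however, is immediate from the fact that for $y \in V$ the inner products $u\,y$ with $u \in L$ coincide with $\varphi(u)\,\varphi(y)$ in $\bR^m$, so no information is lost by restricting to $V$ and no new annihilating vectors are introduced.
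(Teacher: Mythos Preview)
Your proof is correct and is exactly the reduction the paper indicates: the authors do not give a detailed argument but simply remark that the full rank results of Banaszczyk~\cite{Bn} and Lagarias--Lenstra--Schnorr~\cite{LLS} extend by ``introducing an orthonormal basis in the linear subspace $\spn(L)$ and replacing any vector $x \in \spn(L)$ by its coordinates with respect to it.'' Your write-up is a careful execution of precisely this hint, including the verification that the isometry $\varphi$ carries $L'$ onto $\widetilde{L}'$.
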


\section{Ultraproducts of lattices}\label{2}

\noindent
In order to keep our presentation self-contained, we give a brief account of the
ultraproduct construction and some notions of nonstandard analysis here. Nonetheless,
the readers are strongly advised to consult some more detailed exposition such as those
in Chang-Keisler \cite{CK}, Davis \cite{D} and Henson \cite{H}.

A nonempty system $D$ of subsets of a set $I$ is a called a \textit{filter} on $I$
if \,$\emptyset \nin D$, $D$ is closed with respect to intersections, and, for any
$X \in D$, $Y \sbs I$, the inclusion $X \sbs Y$ implies $Y \in D$. A filter $D$ on $I$ is
called an \textit{ultrafilter} if for any $X \sbs I$ either $X \in D$ or $I \sms X \in D$.
Ultrafilters of the form $D = \{X \sbs I: j \in X\}$, where $j \in I$, are called
\textit{principal}. As a consequence of the \textit{axiom of choice}, every filter on $I$
is contained in some ultrafilter; in particular, nonprincipal ultrafilters exist on every
infinite set $I$.

Given a set $I$ and a family of first order structures $(A_i)_{i \in I}$ of some first
order language $\Lam$, we can form their direct product $\prod_{i\in I} A_i$ with basic
operations and relations defined componentwise. If, additionally, $D$ is a filter on $I$,
then
$$
\alpha \equiv_D \beta \IFF \{i \in I:\ \alpha(i) = \beta(i)\} \in D
$$
defines an equivalence relation on $\prod A_i$. Denoting by $\alpha/D$ the coset of
a function $\alpha \in \prod A_i$ with respect to $\equiv_D$, the quotient
$$
B = \prod A_i \Big{/}\! D = \prod A_i \Big{/}\!\!\equiv_D\,,
$$
naturally becomes a $\Lam$-structure once we define
$$
f^B(\alpha_1/D,\dots,\alpha_p/D) = \beta/D\,,
$$
where $\beta(i) = f^{A_i}(\alpha_1(i),\dots,\alpha_p(i))$, for any $p$-ary
functional symbol $f$, and
$$
(\alpha_1/D,\dots,\alpha_p/D) \in R^B \IFF
\bigl\{i \in I: (\alpha_1(i),\dots,\alpha_p(i)) \in R^{A_i}\bigr\} \in D
$$
for any $p$-ary relational symbol $R$. Then $B$ is called the \textit{filtered} or
\textit{reduced product} of the family $(A_i)$ with respect to the filter $D$.
If \,$A_i = A$ is the same structure for each $i \in I$, then the reduced product
$$
A^I\!\big{/} D = \prod A_i \Big{/}\! D
$$
is called the \textit{filtered} or \textit{reduced power} of the $\Lam$-structure $A$.
If \,$D$ is an ultrafilter, then we speak of \textit{ultraproducts} and \textit{ultrapowers}.

The key property of ultraproducts is the following

\begin{lem}\label{Los}{\rm [{\L}os Theorem]}
Let $(A_i)_{i \in I}$ be a family of structures of some first order language $\Lam$,
$D$ be an ultrafilter on the index set $I$, $\Phi(x_1,\dots,x_p)$ be a $\Lam$-formula
and $\alpha_1,\dots,\alpha_p \in \prod A_i$. Then the statement
$\Phi(\alpha_1/D,\dots,\alpha_p/D)$ holds in the ultraproduct $\prod A_i\bigl{/}D$
if and only if \,$$
\bigl\{i \in I: \text{$\Phi(\alpha_1(i),\dots,\alpha_p(i))$ {\rm holds in} $A_i$}\bigr\}
\in D\,.
$$
\end{lem}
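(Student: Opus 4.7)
The plan is to prove the Łoś Theorem by induction on the complexity of the formula $\Phi$, after first handling terms by a preliminary subinduction. Throughout, I will repeatedly use that $D$ is closed under finite intersections and under taking supersets (the filter axioms), and at precisely one point, the negation step, the assumption that $D$ is an ultrafilter.

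First I would show by induction on the construction of a $\Lambda$-term $t(x_1,\dots,x_p)$ that, for any $\alpha_1,\dots,\alpha_p \in \prod A_i$, the value $t^B(\alpha_1/D,\dots,\alpha_p/D)$ in the ultraproduct $B = \prod A_i/D$ equals $\tau/D$, where $\tau(i) = t^{A_i}(\alpha_1(i),\dots,\alpha_p(i))$. For variables this is immediate, and for compound terms it follows from the componentwise definition of the operations $f^B$. Armed with this, the atomic case splits into two subcases. For an equation $t_1 = t_2$, the statement $t_1^B(\alpha/D) = t_2^B(\alpha/D)$ is, by the definition of $\equiv_D$, precisely the assertion that $\{i \in I : t_1^{A_i}(\alpha(i)) = t_2^{A_i}(\alpha(i))\} \in D$. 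For a relational atomic formula $R(t_1,\dots,t_q)$, the same conclusion follows directly from the definition of $R^B$ together with the term lemma.

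The inductive step handles the logical connectives and quantifiers. For a conjunction $\Phi \wedge \Psi$, let $X_\Phi$ and $X_\Psi$ be the respective truth-sets of indices. Then $X_\Phi \cap X_\Psi \in D$ iff both $X_\Phi, X_\Psi \in D$ (using closure under intersection in one direction and closure under supersets in the other). For a negation $\neg\Phi$, with truth-set $X_\Phi$, I want to show that $\neg\Phi$ holds in $B$ iff $I \sms X_\Phi \in D$; this is precisely the place where the ultrafilter dichotomy $X_\Phi \in D$ or $I \sms X_\Phi \in D$ is invoked, together with the induction hypothesis and the fact that a filter cannot contain both a set and its complement (since $\emptyset \notin D$). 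Disjunction and implication follow formally from $\wedge$ and $\neg$.

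The main obstacle, and the only genuinely nontrivial step, is the existential quantifier $\exists\, y\, \Psi(y,x_1,\dots,x_p)$. The easy direction: if there is a witness $\gamma/D \in B$ for $\Psi$, then the induction hypothesis produces a set in $D$ on which $\Psi(\gamma(i),\alpha(i))$ holds pointwise, and this set is contained in the truth-set of $\exists y\, \Psi$. For the converse, assuming $X = \{i : A_i \models \exists y\, \Psi(y,\alpha(i))\} \in D$, I need to manufacture a single element $\gamma \in \prod A_i$ that witnesses $\Psi$ on a $D$-large set. Here I invoke the axiom of choice to pick, for each $i \in X$, some $\gamma(i) \in A_i$ with $A_i \models \Psi(\gamma(i),\alpha(i))$, and define $\gamma(i)$ arbitrarily for $i \notin X$; then the truth-set for $\Psi(\gamma/D,\alpha/D)$ contains $X$, hence lies in $D$, and the induction hypothesis finishes the argument. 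The universal quantifier reduces to this case via $\forall y \equiv \neg\exists y\,\neg$.
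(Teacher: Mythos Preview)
Your proof is the standard textbook argument and is correct. However, the paper does not actually prove this lemma: it merely states {\L}o\'{s}'s Theorem as a known result and refers the reader to Chang--Keisler~\cite{CK}, Davis~\cite{D}, and Henson~\cite{H} for details, so there is no ``paper's own proof'' to compare against. Your sketch would serve perfectly well as the omitted argument; the induction on formula complexity, with the term lemma as a preliminary and the axiom of choice invoked at the existential-quantifier step, is exactly how the cited references handle it.
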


As a consequence, the canonical embedding of any $\Lam$-structure $A$ into its ultrapower
$\Ast = A^I\!\big{/}D$ is \textit{elementary}. More precisely, identifying every element
$a \in A$ with the coset $\bar a/D$ of the constant function $\bar a(i) = a$, we have
$$
\text{$\Phi(a_1,\dots,a_p)$\, holds in $A$ \ $\IFF$ \
$\Phi(a_1,\dots,a_p)$\, holds in $\Ast$}
$$
for every $\Lam$-formula $\Phi(x_1,\dots,x_n)$ and any $a_1,\dots,a_p \in A$. This
equivalence will be referred to as the \textit{transfer principle}.

The above accounts almost directly apply to many-sorted structures, like modules
over rings or vector spaces over fields, as well (see \cite{H}). In particular, if
\,$(V)_{i \in I}$ is a family of vector spaces over a field $F$, then the ultraproduct
$\prod V_i\big{/}D$ becomes a vector space over the ultrapower $ {}^{*\!}F = F^I\!\big{/}D$,
which is a field elementarily extending $F$. Similarly, if \,$(G_i)_{i \in I}$ is a family
of abelian groups, viewed as modules over the ring of integers $\bZ$, then the ultraproduct
$\prod G_i\bigl{/}D$ becomes not only an abelian group but also a module over the ring of
\textit{hyperintegers} $\sbZ = \bZ^I\!\bigl{/}D$, elementarily extending the ring $\bZ$.
And, what is of crucial importance, the {\L}os Theorem is still true for formulas in
the corresponding two-sorted language.

From now on $I = \{1,2,3,\dots\}$ denotes the set of all positive integers and $D$ is some
fixed nonprincipal ultrafilter on $I$. We form the ordered field
of \textit{hyperreal numbers} as the ultrapower $\sbR = \bR^I\!\big{/}D$ of the ordered
field $\bR$. Then
\begin{align*}
\FsR &= \{x \in \sbR: \exs r \in \bR,\, r > 0 \: \lv x\rv < r\}\,,\\
\IsR &= \{x \in \sbR: \all r \in \bR,\, r > 0 \: \lv x \rv < r\}
\end{align*}
denote the sets of all \textit{finite} hyperreals and of all \textit{infinitesimals},
respectively. It can be easily verified that $\FsR$ is a subring of \,$\sbR$ and $\IsR$
is an ideal in $\FsR$. Hyperreal numbers not belonging to $\FsR$ are called
\textit{infinite}. For $x \in \sbR$ we sometimes write $\lv x\rv < \infty$ instead
of \,$x \in \FsR$, and $x \sim \infty$ instead of \,$x \nin \FsR$. Two hyperreals $x$, $y$
are said to be \textit{infinitesimally close}, in notation $x \apr y$, if
\,$x - y \in \IsR$. Moreover, for each $x \in \FsR$, there is a unique real number
$\xo \in \bR$, called the \textit{standard part} of \,$x$, such that $x \apr \xo$. As a
consequence, $\FsR/\IsR \iso \bR$ as ordered fields.

A hyperreal number $x = \alpha/D$, where $\alpha\:I \to \bR$, is finite if and only
if there is a positive $r \in \bR$ such that $\{i \in I: \lv \alpha(i)\rv < r\} \in D$;
this is equivalent to the convergence of the sequence $\alpha$ to $\xo$ with respect to
the filter $D$. In particular, $x$ is infinitesimal if and only if \,$\xo = 0$, i.e., if
and only if the sequence $\alpha$ converges to 0 with respect to~$D$. As $D$ necessarily
extends the Frechet filter, $\lim_{i \to \infty} \alpha(i) = a \in \bR$ in the usual sense
implies $\xo = a$, i.e., $x \apr a$.

The standard part map has the following homomorphy properties with respect to the field
operations:
$$
{}^\co(x + y) = \xo + \yo
\quad\text{and}\quad
{}^\co(x\,y) = \xo\,\yo
$$
for any $x,y \in \FsR$, and if additionally $x \napr 0$, then also
${}^\co\bigl(x^{-1}\bigr) = \bigl(\xo\bigr)^{-1}$.

Along with the equivalence relation of infinitesimal nearness $\apr$, we introduce the
relation of \textit{archimedean equivalence} $\sim$ or \textit{order equality} on $\sbR$
as follows:
$$
x \sim 0 \Iff x = 0\,,
\quad \text{and}\quad
x \sim y \Iff 0 \napr \lv\frac{x}{y}\rv < \infty \ \text{\,for \,$x,y \ne 0$}\,.
$$
When $x \sim y$ we say that $x$ and $y$ are of the \textit{same (archimedean) order}.
We also say that $x$ is of \textit{smaller order} than $y$ or that $y$ is of
\textit{bigger order} than $x$, in symbols $x \ll y$, if \,$y \ne 0$ and
$\frac{x}{y} \apr 0$. Obviously, for $x,y \ne 0$, $x \sim y$ is equivalent to
neither $x \ll y$ nor $y \ll x$.

According to the transfer principle, we can identify, for any finite intger $n \ge 1$,
the vector space $(\sbR)^n$ over the field $\sbR$ and the ultrapower
${}^*(\bR^n) = (\bR^n)^I\!\big{/}D$, so that the notation $\sbR^n$ is unambiguous.
More generally, for any subset $S \sbs \bR^n$ we identify the ultrapower
${}^*S = S^I\!\big{/}D$ with the subset
$$
\bigl\{(\alpha_1/D,\dots,\alpha_n/D) \in \sbR^n:
   \{i \in I: (\alpha_1(i),\dots,\alpha_n(i)) \in S\} \in D\bigr\}
$$
of \,$\sbR^n$. The inner product on $\bR^n$ extends to the inner product on $\sbR^n$,
preserving all its first order properties. In order to distinguish the linear spans
with respect to the fields $\bR$ and $\sbR$, respectively, we introduce the
\textit{internal linear span} of a set $X \sbs \sbR^n$ which, due to the fact that
the ambient vector space $\sbR^n$ has finite internal dimension $n$, can be described
as follows:
$$
\sspn(X) = \{a_1x_1 + \ldots + a_nx_n: x_1,\dots,x_n \in X,\ a_1,\dots,a_n \in \sbR\}\,,
$$
We also distinguish the lattice or subgroup, i.e., the $\bZ$-submodule $\grp(v_1,\dots,v_m)$
of \,$\bR^n$ generated by vectors $v_1,\dots,v_m \in \bR^n$, and the internal lattice
\textit{internally generated} by vectors $v_1,\dots,v_m \in \sbR^n$, i.e., the
$\sbZ$-submodule
$$
\sgrp(v_1,\dots,v_m) = \{c_1 v_1 + \ldots + c_m v_m: c_1,\dots,c_m \in \sbZ\}
$$
of \,$\sbR^n$.

Similarly as in $\sbR$, vectors from $\FsR^n$ are called \textit{finite} and vectors
from $\IsR^n$ are called \textit{infinitesimal}. Obviously,
\begin{align*}
\FsR^n &= \bigl\{x \in \sbR^n: \LV x\RV < \infty\bigr\}\,,\\
\IsR^n &= \bigl\{x \in \sbR^n: \LV x\RV \apr 0\bigr\}\,.
\end{align*}
Both $\FsR^n$ and $\IsR^n$ are vector spaces over $\bR$ and even modules over $\FsR$, but
not over $\sbR$. Vectors $x, y \in \sbR^n$ are said to be \textit{infinitesimally close},
in notation $x \apr y$, if \,$x - y \in \IsR^n$, i.e., if \,$\LV x - y\RV \apr 0$. The
\textit{standard part} of a vector $x = (x_1,\dots,x_n) \in \FsR^n$ is the vector
$\xo = (\xo_1,\dots,\xo_n)$; obviously, $\xo$ is the unique vector in $\bR^n$
infinitesimally close to $x$. Then $\FsR^n\!/\IsR^n \iso \bR^n$ as vector spaces
over $\bR$.

Though the ultraproduct construction can be applied to any family of lattices
$L_i \sbs \bR^{n_i}$, it is sufficient for our purpose to deal with lattices
situated in the same ambient vector space $\bR^n$ with $n \ge 1$ fixed. Given a
sequence $(L_i)_{i \in I}$ of lattices $L_i \sbs \bR^n$ we can form the ultraproduct
$\prod L_i\big{/}D$ and identify it with the subset
$$
L = \bigl\{(\alpha_1/D,\dots,\alpha_n/D) \in \sbR^n:
         \{i \in I: (\alpha_1(i),\dots,\alpha_n(i)) \in L_i\} \in D\bigr\}
$$
of the vector space $\sbR^n$ over $\sbR$. Then $L$ is an internal discrete additive
subgroup of \,$\sbR^n$, i.e., it is a module over the ring of \textit{hyperintegers}
$\sbZ$ and there is a positive $\lambda \in \sbR$ such that $\LV x - y\RV \ge \lambda$
for any distinct $x, y \in L$; however, it should be noticed that $\lambda$ may well be
infinitesimal. Moreover, as $D$ is an ultrafilter, there is an $m \le n$ and a set
$J \in D$ such that $\rank(L_i) = m$ for each $i \in J$. We write $\rank(L) = m$ and refer
to $L$ as an \textit{internal lattice} in $\sbR^n$ of rank $m$. Then we can assume, without
loss of generality, that $\rank(L_i) = m$ for each $i \in I$. The Minkowski successive
minima of such an internal lattice $L$ can be defined in two ways which are equivalent
by the transfer principle:
$$
\lambda_k(L) = \bigl(\lambda_k(L_i)\bigr)_{i\in I}\big{/}D
= \min\{\lambda \in \sbR: \lambda > 0,\ \rank(L \cap \lambda \Bst) \ge k\}
$$
for $k \le m$. Then $0 < \lambda_1(L) \le \ldots \le \lambda_m(L)$ is a sequence
of \textit{hyperreal} numbers, hence it can contain both infinitesimals as well as
infinite hyperreals. Additionally, we put
\begin{align*}
\rank_0(L) &= \#\{k: 1 \le k \le m,\ \lambda_k(L) \apr 0\}\,,\\
\rankf(L)  &= \#\{k: 1 \le k \le m,\ \lambda_k(L) < \infty\}\,,
\end{align*}
where $\#\,H$ denotes the number of elements of a finite set $H$. Note that
$\rank_0(L) = 0$ if \,$\lambda_1(L) \napr 0$, as well as $\rankf(L) = 0$ if
\,$\lambda_1(L) \nin \FsR$. Obviously, if \,$\rank_0(L) > 0$, then it is the biggest
$k \le m$ such that $\lambda_k(L) \apr 0$; similarly, if \,$\rankf(L) > 0$, then it
is the biggest $k \le m$ such that $\lambda_k(L) < \infty$.

At the same time, we can assume that $\beta_1,\dots,\beta_m \in \prod L_i$ are functions
such that, for each $i \in I$ (or at least for each $i$ from some set $J \in D$), the
$m$-tuple of vectors $\bbeta(i) = (\beta_1(i),\dots,\beta_m(i))$ is a Minkowski reduced
basis of the lattice $L_i$. Then, due to {\L}os Theorem (Lemma~\ref{Los}), the $m$-tuple
$\bbeta/D = (v_1,\dots,v_m)$, where $v_k = \beta_k/D$ for $k \le m$, is a Minkowski
reduced basis of the internal lattice $L$, i.e., the vectors $v_1,\dots,v_m$ are linearly
independent over $\sbR$ and generate $L$ as a $\sbZ$-module.

\begin{lem}\label{mink-bas-min}
Let $L \sbs \sbR^n$ be an internal lattice of rank $m$ and $\bbeta = (v_1,\dots,v_m)$ be a
Minkowski reduced basis of \,$L$. Then the following hold true:
\begin{enum}
\item[\sl (a)]
If \,$\LV v_k\RV \ll \LV v_{k+1}\RV$ for some $k < m$ and \,$V = \sspn\{v_1,\dots,v_k\}$,
then \newline
$\LV x\RV \nll \LV v_{k+1}\RV$ for every vector $x \in L \sms V$.
\item[\sl (b)]
$\LV v_k\RV \sim \lambda_k(L)$ for each $k \le m$.
\end{enum}
\end{lem}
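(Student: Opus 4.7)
The plan is to establish (b) first by transferring a classical bound on Minkowski reduced bases, and then to derive (a) from (b) via a short comparison using the internal definition of $\lambda_{k+1}(L)$.

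For (b), I would rely on the well-known fact (see, e.g., \cite{GL}) that for any Minkowski reduced basis $(v_1,\dots,v_m)$ of a lattice of rank $m$ in $\bR^n$ there exists a constant $\gamma_m \ge 1$, depending only on $m$, such that
$$
\lambda_k(L) \le \LV v_k\RV \le \gamma_m\,\lambda_k(L)
$$
for every $k \le m$. Since $L$ is the ultraproduct of standard lattices $L_i$ whose chosen Minkowski reduced bases ultraproduce to $\bbeta$, these inequalities hold in each $L_i$, so {\L}os Theorem (Lemma~\ref{Los}) transfers them to the internal lattice $L$ in $\sbR^n$. Because $\gamma_m$ is a positive standard real, the transferred two-sided bound immediately yields $\LV v_k\RV \sim \lambda_k(L)$ for each $k \le m$.

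For (a), fix $x \in L \sms V$. The vectors $v_1,\dots,v_k,x$ are linearly independent over $\sbR$ (since $x \nin V = \sspn\{v_1,\dots,v_k\}$), and each has norm at most $\rho := \max(\LV v_k\RV,\LV x\RV)$; hence $\rank(L \cap \rho\,\Bst) \ge k+1$, and the internal definition of successive minima gives $\lambda_{k+1}(L) \le \rho$. By part (b), $\lambda_{k+1}(L) \sim \LV v_{k+1}\RV$, so there exists a standard real $c > 0$ with $\lambda_{k+1}(L) \ge c\,\LV v_{k+1}\RV$, and therefore $\rho \ge c\,\LV v_{k+1}\RV$. If, contrary to our claim, $\LV x\RV \ll \LV v_{k+1}\RV$ held, then together with the hypothesis $\LV v_k\RV \ll \LV v_{k+1}\RV$ we would get $\rho \ll \LV v_{k+1}\RV$, contradicting $\rho \ge c\,\LV v_{k+1}\RV$ for the positive standard real $c$.

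The only delicate step is the appeal to the classical Minkowski reduced basis bound: what matters is that the constant $\gamma_m$ depends only on the standard integer $m$ (and not on the particular lattice), so it survives transfer as a standard positive real. Once this is granted, everything else in the plan is routine bookkeeping with the archimedean relations $\sim$ and $\ll$.
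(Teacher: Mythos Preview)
Your argument is correct. You prove (b) first by transferring the classical inequality $\lambda_k(L)\le\LV v_k\RV\le\gamma_m\,\lambda_k(L)$ and then deduce (a) by bounding $\lambda_{k+1}(L)$ from above using the $k+1$ independent vectors $v_1,\dots,v_k,x$ (implicitly using $\LV v_1\RV\le\cdots\le\LV v_k\RV$, which holds for a Minkowski reduced basis). The paper proceeds in the opposite order: it proves (a) directly from Lemma~\ref{basis-ext} by constructing, from a putative short $x\in L\sms V$, a vector $z'$ that extends $(v_1,\dots,v_k)$ to a basis yet has $\LV z'\RV<\LV v_{k+1}\RV$; it then derives (b) from (a) by an inductive contradiction. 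In fact, Remark~\ref{rem1} of the paper explicitly observes that your route---transfer the quantitative bound to get (b), then read off (a)---is available and simpler. What the paper's approach buys is self-containment: it obtains the qualitative relations (a) and (b) from nothing more than Lemma~\ref{basis-ext} and the existence of Minkowski reduced bases, without importing the external constant $\gamma_m$. Your approach trades that economy of hypotheses for a much shorter proof.
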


\begin{proof}
(a) Assume that, under the assumptions of (a), we have
$\LV x\RV \ll \LV v_{k+1}\RV$ for some $x \in L \sms V$. We denote the orthogonal
projection of a vector $y \in \sbR^n$ to $V$ by $y_V$. Let $z \in L \sms V$ be a
vector such that its the distance $z - z_V$ to $V$ is minimal from among all the
vectors $y \in L \sms V$. Therefore,
$$
\LV z - z_V\RV \le \LV x - x_V\RV \le \LV x\RV\,.
$$
As $z_V \in V$, there are hyperreals $a_1,\dots,a_k \in \sbR$ such that
$z_V = a_1 v_1 + \ldots + a_k v_k$. Denoting $c_j = \lfloor a_j\rfloor$ their lower
integer parts and $z' = z - c_1 v_1 - \ldots - c_k v_k \in L$, we have $z - z' \in V$,
hence $\LV z' - z'_V\RV = \LV z - z_V\RV$, so that the vector $z' \in L$ has the same
minimality property as $z$. Then, according to Lemma~\ref{basis-ext} and the transfer
principle, the $(k+1)$-tuple $(v_1,\dots,v_k,z')$ can be extended to a basis of \,$L$,
hence $\LV v_{k+1}\RV \le \LV z'\RV$, as the basis $(v_1,\dots,v_m)$ is Minkowski
reduced. At the same time,
$$
z'_V =  (a_1 - c_1)v_1 + \ldots + (a_k - c_k)v_k\,,
$$
with $\lv a_j - c_j\rv < 1$ for each $j \le k$. From the triangle inequality  we get
\begin{align*}
\LV z'\RV &\le \LV z'_V\RV + \LV z' - z'_V\RV \\
   &= \LV (a_1 - c_1)v_1 + \ldots + (a_k - c_k)v_k\RV + \LV z - z_V\RV \\
   &< \LV v_1\RV + \ldots + \LV v_k\RV + \LV x\RV\,.
\end{align*}
Therefore, $\LV z'\RV \ll \LV v_{k+1}\RV$, hence $\LV z'\RV < \LV v_{k+1}\RV$,
which is a contradiction.

(b) Because $\LV v_1\RV = \lambda_1(L)$, the statement of (b) is true for $k=1$.
Assume, toward a contradiction, that $k < m$ for the biggest index satisfying
$\LV v_k\RV \sim \lambda_k(L)$. Then
$$
1 \le \frac{\LV v_k\RV}{\lambda_k(L)} < \infty
\qquad\text{and}\qquad
\frac{\lambda_{k+1}(L)}{\LV v_{k+1}\RV} \apr 0\,.
$$
Therefore,
$$
\frac{\LV v_k\RV}{\LV v_{k+1}\RV} \le
\frac{\lambda_{k+1}(L)}{\lambda_k(L)} \cd \frac{\LV v_k\RV}{\LV v_{k+1}\RV}
= \frac{\LV v_k\RV}{\lambda_k(L)} \cd \frac{\lambda_{k+1}(L)}{\LV v_{k+1}\RV} \apr 0\,.
$$
Then, according to (a), $\frac{\LV x\RV}{\LV v_{k+1}\RV} \napr 0$ for every vector
$x \in L \sms \sspn(v_1,\dots,v_k)$. In particular,
$\frac{\lambda_{k+1}(L)}{\LV v_{k+1}\RV} \napr 0$.
\end{proof}

\begin{rem}\label{rem1}
(b) of Lemma~\ref{mink-bas-min} follows immediately, by applying the transfer principle,
from the following estimates of the lengths of vectors in any Minkowski reduced basis
$(v_1,\ldots,v_m)$ of a rank $m$ lattice $L \sbs \bR^n$ in terms of its Minkowski successive
minima:
$$
\lambda_k(L) \le \LV v_k\RV \le 2^k \lambda_k(L)
$$
for all $k \le m$ (see Lagarias \cite{L1}; Mahler \cite{Mh1} has even better upper bounds).
Then (a) could be proved as an easy consequence of (b). However, it is perhaps worthwhile
to notice that, using the internal lattice concept, the purely qualitative estimates (a),
(b) follow already from Lemma~\ref{basis-ext} and the existence of Minkowski reduced
bases.
\end{rem}

The \textit{standard part} ${}^{\co\!}X$ of a set $X \sbs \sbR^n$ consists of the standard
parts of all finite vectors from $X$; alternatively, it can be formed by taking the quotient
of the set of finite vectors from $X$ with respect to the equivalence relation of infinitesimal
nearness. Identifying the results of both approaches, we have
$$
{}^{\co\!}X =\bigl(X \cap \FsR^n\bigr)/\!\!\apr\
            = \bigl\{\xo: x \in X \cap \FsR^n\bigr\}
    = \bigl\{y \in \bR^n: \exs x \in X: y \apr x\bigr\}\,.
$$
In particular, for an additive subgroup $G \sbs \sbR^n$ we denote
$$
\FG = G \cap \FsR^n
\qquad\text{and}\qquad
\IG = G \cap \IsR^n
$$
the additive subgroups of \,$\sbR^n$ formed by the finite and infinitesimal elements in $G$,
respectively. Then its standard part ${}^\co G$ is an additive subgroup of \,$\bR^n$ which
can be identified with the quotient
$$
{}^\co G = \FG/\IG\,.
$$
However, even for an internal lattice $L \sbs \sbR^n$, its standard part $\Lo$ is not
necessarily discrete, hence it need not be a lattice in $\bR^n$. A more detailed account
will follow after a preliminary lemma.

\begin{lem}\label{observe-indep}
Let $L \sbs \sbR^n$ be an internal lattice of rank $m$ and $\bbeta = (v_1,\dots,v_m)$ be a
Minkowski reduced basis of \,$L$ such that all the vectors in $\bbeta$ are infinitesimal.
Then there exist hyperintegers $c_1,\dots,c_m \in \sbZ$ such that all the vectors
$c_k v_k$ are finite but not infinitesimal and $c_k$ divides $c_{k-1}$ whenever
$2 \le k \le m$. For such a choice of \,$c_1$, \dots, $c_m$ the internal sublattice
$M = \sgrp(c_1v_1,\dots,c_mv_m) \sbs L$ contains no infinitesimal vector except for~$0$,
in other words $\lambda_1(M) \napr 0$.
\end{lem}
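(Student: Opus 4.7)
The plan is to prove the lemma in two stages. The first step constructs the $c_k$'s by an internal recursion from $c_m$ down to $c_1$. Set $c_m = \lceil 1/\LV v_m\RV\rceil \in \sbN$ via the internal ceiling function; since $\LV v_m\RV$ is a positive infinitesimal, $c_m$ is an infinite hyperinteger and $c_m\LV v_m\RV \in [1,1+\LV v_m\RV)$, so $c_m v_m$ is finite and not infinitesimal. For $k=m-1,m-2,\dots,1$, define $c_k = c_{k+1}\lceil 1/(c_{k+1}\LV v_k\RV)\rceil$, which bakes in the divisibility $c_{k+1}\mid c_k$. Lemma~\ref{mink-bas-min}(b) together with the standard bounds $\lambda_k(L)\le\LV v_k\RV\le 2^k\lambda_k(L)$ from Remark~\ref{rem1} shows that $c_{k+1}\LV v_k\RV$ is finite, and then the elementary identity $\alpha\lceil 1/\alpha\rceil \in[1,1+\alpha)$ for $0<\alpha\le 1$ (together with the trivial case $\alpha>1$) yields $c_k\LV v_k\RV\ge 1$ and $c_k\LV v_k\RV\le 1+c_{k+1}\LV v_k\RV<\infty$, so $c_k v_k$ is finite but not infinitesimal.

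For the main claim $\lambda_1(M)\napr 0$, take an arbitrary nonzero $x=\sum_{k=1}^m d_k c_k v_k\in M$ and let $\ell$ be the largest index with $d_\ell\ne 0$. By Lemma~\ref{mink-bas-min}(b) the ratios $\LV v_k\RV/\LV v_{k+1}\RV$ are always finite, so the basis vectors split into finitely many archimedean classes; let $i\le\ell$ be the smallest index with $\LV v_i\RV\sim\LV v_\ell\RV$, so that $\LV v_k\RV\sim\LV v_\ell\RV$ for every $k\in\{i,\dots,\ell\}$ and either $i=1$ or $\LV v_{i-1}\RV\ll\LV v_i\RV$. Set $V=\sspn(v_1,\dots,v_{i-1})$ (with the convention $V=\{0\}$ when $i=1$) and let $\pi\: \sbR^n\to V^\perp$ be the orthogonal projection.

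The technical heart of the argument is the estimate $\lambda_1(\pi(L))\sim\LV v_i\RV$. Choose $z\in L\sms V$ realising $\mathrm{dist}(z,V)=\lambda_1(\pi(L))$ and round $z$ modulo $\sgrp(v_1,\dots,v_{i-1})$ to obtain $z'\in L\sms V$ with $\LV z'\RV \le \lambda_1(\pi(L)) + (i-1)\LV v_{i-1}\RV$; Lemma~\ref{basis-ext} lets $(v_1,\dots,v_{i-1},z')$ be extended to a basis of $L$, so Minkowski reducedness forces $\LV v_i\RV\le\LV z'\RV$. Since $(i-1)\LV v_{i-1}\RV\ll\LV v_i\RV$ (or vanishes when $i=1$), this gives $\lambda_1(\pi(L)) \ge \LV v_i\RV - (i-1)\LV v_{i-1}\RV \sim \LV v_i\RV$; combined with the trivial bound $\lambda_1(\pi(L))\le\LV\pi(v_i)\RV\le\LV v_i\RV$ we conclude $\lambda_1(\pi(L))\sim\LV v_i\RV$.

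To finish, the divisibility chain $c_m\mid c_{m-1}\mid\dots\mid c_1$ makes $c_\ell\mid c_k$ for every $k\le\ell$, and for $k\in\{i,\dots,\ell\}$ the quotient $c_k/c_\ell\in\sbN$ is additionally finite because $c_k\sim c_\ell$ lie in the same archimedean class. Hence $\pi(x)=c_\ell\sum_{k=i}^\ell d_k(c_k/c_\ell)\pi(v_k)=c_\ell w$ for some $w\in\pi(L)$, and $w\ne 0$ since $d_\ell\ne 0$ and the vectors $\pi(v_i),\dots,\pi(v_m)$ are linearly independent over $\sbR$ in $V^\perp$. Consequently $\LV x\RV \ge \LV\pi(x)\RV = c_\ell\LV w\RV \ge c_\ell\lambda_1(\pi(L))\sim c_\ell\LV v_\ell\RV\sim 1$, so $\LV x\RV\napr 0$. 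The principal obstacle is precisely the projection estimate $\lambda_1(\pi(L))\sim\LV v_i\RV$, which amounts to transporting Lemma~\ref{mink-bas-min}(a) to the projected lattice $\pi(L)$ and exploiting that $i$ sits at the bottom of an archimedean class.
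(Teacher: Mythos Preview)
Your proof is correct, and the construction of the $c_k$'s is essentially the same as the paper's (the paper splits into two cases at each step while you fold both into the single formula $c_k = c_{k+1}\lceil 1/(c_{k+1}\LV v_k\RV)\rceil$). One small remark: invoking Remark~\ref{rem1} to see that $c_{k+1}\LV v_k\RV$ is finite is unnecessary, since simply $\LV v_k\RV \le \LV v_{k+1}\RV$ (Minkowski reducedness) gives $c_{k+1}\LV v_k\RV \le c_{k+1}\LV v_{k+1}\RV < \infty$.

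For the main claim $\lambda_1(M)\napr 0$, however, your route differs from the paper's. The paper argues by contradiction: assuming $0 \ne x = \sum a_k c_k v_k \apr 0$, it lets $q$ be the largest index with $a_q \ne 0$ and, using the divisibility chain, forms $x' = x/c_q \in L$; from $c_q x' \apr 0$ and $c_q v_q \napr 0$ one gets $\LV x'\RV \ll \LV v_q\RV$, and then for the least $p$ with $\LV x'\RV \ll \LV v_p\RV$ the vectors $v_1,\dots,v_{p-1},x'$ witness $\lambda_p(L) \ll \LV v_p\RV$, contradicting Lemma~\ref{mink-bas-min}(b) directly. Your argument instead produces an explicit lower bound: you project onto the orthocomplement of $V = \sspn(v_1,\dots,v_{i-1})$, factor out $c_\ell$, and bound $\LV x\RV \ge c_\ell\,\lambda_1(\pi(L)) \sim c_\ell\LV v_\ell\RV \sim 1$. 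The projection estimate $\lambda_1(\pi(L)) \sim \LV v_i\RV$ that you establish is essentially a rederivation of Lemma~\ref{mink-bas-min}(a) in the projected setting, so the two proofs ultimately rest on the same ingredients (Lemma~\ref{basis-ext} and Minkowski reducedness); the paper's version is shorter because it appeals to Lemma~\ref{mink-bas-min}(b) as a black box rather than reproving its content for $\pi(L)$, while yours has the mild advantage of yielding a direct lower bound rather than a contradiction.
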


\begin{proof}
Let's start with an arbitrary $c_m \in \sbZ$ such that $c_m v_m \in \FL \sms \IL$
(e.g., one can put $c_m = \Bigl\lceil\LV v_m\RV^{-1}\Bigr\rceil$ guaranteeing that
$1 \le \LV c_m v_m\RV < 1 + \LV v_m\RV \apr 1$). Further we proceed by backward recursion.
Assuming that $2 \le k \le m$ and $c_k$ is already defined, we put $c_{k-1} = c_k$ if
\,$c_k v_{k-1} \napr 0$ (as $\LV v_{k-1}\RV \le \LV v_k\RV$, $c_k v_{k-1} \in \FL$ is
satisfied automatically), otherwise we put $c_{k-1} = b c_k$ where $b \in \sbZ$ is
any hyperinteger such that $b c_k v_{k-1} \in \FL \sms \IL$ (e.g.,
$b = \Bigl\lceil\LV c_kv_{k-1}\RV^{-1}\Bigr\rceil$ will work). Obviously,
$c_k \in \sbZ$ divides $c_{k-1} \in \sbZ$ for any $2 \le k \le m$.

Assume that $x \apr 0$ where $x = a_1c_1v_1 + \ldots + a_mc_mv_m$ for some
$a_1,\dots,a_m \in \sbZ$, not all equal to~0. Let $q \le m$ be the biggest index
such that $a_q \ne 0$. Then
$$
x' = \frac{1}{c_q}\,x = \sum_{k=1}^q \frac{a_k c_k}{c_q}\,v_k \ne 0
$$
is a vector from the internal lattice $L$. Moreover, $c_q x' = x \apr 0$, while
\hbox{$c_q v_q \napr 0$,} hence $\LV x'\RV \ll \LV v_q\RV$. Let $p \le q$ be the
smallest index such that $\LV x'\RV \ll \LV v_p\RV$. Denote
$\lambda = \LV x'\RV$ if \,$p = 1$, or $\lambda = \max(\LV v_{p-1}\RV, \LV x'\RV)$ if
\,$p > 1$. Then the hyperball $\lambda\,\Bst$ contains $p$ linearly independent vectors
$v_1,\dots,v_{p-1},x'$ from $L$, hence $\lambda_p(L) \le \lambda$ and, at the same time,
$\lambda \ll \LV v_p\RV$, contradicting Lemma~\ref{mink-bas-min}\,(b).
\end{proof}

\begin{prop}\label{stand-part}
Let $L = \prod L_i\big{/}D \sbs \sbR^n$ be an internal lattice of rank $m$ and $\Lo$ be
its standard part. Then the following hold true:
\begin{enum}
\item[\sl (a)]
$\Lo$ is a lattice in $\bR^n$ if and only if there is a positive $\lambda \in \bR$
such that the set $\{i \in I: \lambda_1(L_i) \ge \lambda\}$ belongs to $D$. This
is equivalent to $\lambda_1(L) \napr 0$ as well as to $\rank_0(L) = 0$.
\item[\sl (b)]
$\Lo$ is the direct sum of a linear subspace of \,$\bR^n$ of dimension $\rank_0(L)$
and a lattice in $\bR^n$ of rank $\rankf(L) - \rank_0(L)$.
\item[\sl (c)]
$\Lo$ is a lattice of rank $q \le m$ if and only if \,$\rank_0(L) = 0$ and
\,$\rankf(L) = q$.
\end{enum}
\end{prop}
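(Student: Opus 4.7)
Fix a Minkowski reduced basis $(v_1,\dots,v_m)$ of $L$. By Lemma~\ref{mink-bas-min}(b) it splits into three blocks: the infinitesimal block $v_1,\dots,v_s$ ($s:=\rank_0(L)$), the finite non-infinitesimal block $v_{s+1},\dots,v_r$ ($r:=\rankf(L)$), and the infinite block $v_{r+1},\dots,v_m$. Write $V_k:=\sspn(v_1,\dots,v_k)$, $M_k:=\sgrp(v_1,\dots,v_k)$ and $W:=V_s$. The overall plan is to prove (a) directly, do the main work for (b) by treating the first two blocks separately, and read (c) off from (a) and (b).

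For (a), if $\lambda_1(L)\napr 0$, pick a real $\lambda>0$ with $\lambda_1(L)\ge\lambda$; for distinct $\xo,\yo\in\Lo$ the nonzero vector $x-y\in L$ has $\LV x-y\RV\ge\lambda$, so $\LV\xo-\yo\RV\ge\lambda$ and $\Lo$ is discrete. Conversely, if $\lambda_1(L)\apr 0$, the multiples $\bigl\lceil 1/(j\LV v_1\RV)\bigr\rceil v_1$ for $j=1,2,\dots$ give nonzero elements of $\Lo$ of norm $1/j$, destroying discreteness. The equivalence with $\rank_0(L)=0$ is immediate from the definition.

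For (b), I would proceed in four steps. (i) Every finite $x\in L$ already lies in $M_r$: any $x\in L\sms V_r$ satisfies $\LV x\RV\nll\LV v_{r+1}\RV$ by Lemma~\ref{mink-bas-min}(a) applied at the jump $\LV v_r\RV\ll\LV v_{r+1}\RV$, forcing $\LV x\RV$ to be infinite. So $\Lo={}^\co M_r$ and we may assume $L=M_r$. (ii) Check via Lemma~\ref{basis-ext} that $(v_1,\dots,v_s)$ is Minkowski reduced for $M_s$ (any shorter competitor in $M_s$ extending $(v_1,\dots,v_{k-1})$ to a basis of $M_s$ would also extend to one of $L$ by appending $v_{s+1},\dots,v_m$). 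Lemma~\ref{observe-indep} then yields $c_1,\dots,c_s\in\sbZ$ with $\lambda_1(\sgrp(c_1v_1,\dots,c_sv_s))\napr 0$; transferring Minkowski's first theorem to the internal $s$-dimensional subspace $W$ shows the Gram determinant of $(c_1v_1,\dots,c_sv_s)$ is not infinitesimal, whence the standard parts $u_k:={}^\circ(c_kv_k)$ are $\bR$-linearly independent. A floor-decomposition argument $a_k=q_kc_k+r_k$ (with $0\le r_k<c_k$), together with the analogous decomposition for hyperreal coefficients, identifies ${}^\co M_s={}^\co W=U:=\spn_\bR\{u_1,\dots,u_s\}$, a subspace of dimension $s$. (iii) Define the perpendicular image $L^\perp:=\{x_{W^\perp}:x\in L\}\sbs W^\perp$, an internal lattice of rank $t:=r-s$. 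For $x=\sum a_kv_k\in L\sms M_s$ with largest nonvanishing index $q>s$, further projecting $x_{W^\perp}$ onto $V_{q-1}^\perp$ yields exactly $a_qv_q^*$ (writing $v_q^*$ for the Gram--Schmidt component of $v_q$ orthogonal to $V_{q-1}$; lower-indexed terms vanish), whence $\LV x_{W^\perp}\RV\ge\LV v_q^*\RV\sim\lambda_q(L)\ge\lambda_{s+1}(L)\napr 0$. By (a) applied inside $W^\perp$, $\Lambda:={}^\co L^\perp$ is a lattice, and a second Gram-determinant argument (using $\det(L^\perp)\sim\prod_{k>s}\LV v_k\RV$ non-infinitesimal) shows $\Lambda$ has rank $t$. (iv) Assemble $\Lo=U\oplus\Lambda$: any finite $x\in L$ decomposes as $x=x_W+x_{W^\perp}$ with both summands finite, giving $\xo={}^\co x_W+{}^\co x_{W^\perp}\in U+\Lambda$; conversely $U\sbs{}^\co M_s\sbs\Lo$ is automatic, while for $y={}^\co x_{W^\perp}\in\Lambda$ one chooses $m\in M_s$ so that $x_W-m=\sum r_kv_k$ with $r_k\in[0,1)$, which is infinitesimal (each $v_k$, $k\le s$, is), whence $(x-m)^\circ={}^\co y\in\Lo$. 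Directness follows from $U\sbs{}^\co W$, $\Lambda\sbs{}^\co(W^\perp)$, and the orthogonality $W^\circ\perp (W^\perp)^\circ$. Part (c) is immediate: $\Lo$ is a lattice iff $U=0$, i.e., $\rank_0(L)=0$ by (a), and then $\Lo=\Lambda$ has rank $t=\rankf(L)$.

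\textbf{Main obstacle.} The pair of Gram-determinant/covolume arguments---first to deduce $\bR$-linear independence of $u_1,\dots,u_s$ from $\lambda_1(\sgrp(c_kv_k))\napr 0$, then to confirm $\rank\Lambda=t$---is the most delicate part, as it requires transferring Minkowski's first theorem to the nonstandard setting and converting a non-infinitesimal lower bound on $\lambda_1$ into a non-infinitesimal lower bound on the Gram determinant, which then forces linear independence of the standard parts in $\bR^n$. A secondary but easier technicality is checking that the Minkowski reducedness of $(v_1,\dots,v_m)$ for $L$ restricts to Minkowski reducedness of $(v_1,\dots,v_s)$ for $M_s$, handled via Lemma~\ref{basis-ext}.
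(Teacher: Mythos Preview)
Your proposal is correct, and part (a) essentially matches the paper. For part (b), however, you take a genuinely different route from the paper.

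You split the Minkowski reduced basis into the infinitesimal block ($k\le s$) and the finite-noninfinitesimal block ($s<k\le r$), treat each separately via Gram-determinant arguments backed by Minkowski's first theorem, and then reassemble via the orthogonal decomposition $W\oplus W^\perp$. The paper instead handles both blocks \emph{simultaneously} by a scaling trick: for any infinite hyperinteger $b$, the rescaled vectors $b^{-1}v_1,\dots,b^{-1}v_q$ are \emph{all} infinitesimal and still form a Minkowski reduced basis (of $b^{-1}L$), so Lemma~\ref{observe-indep} applies directly to the full list with $c_k':=bc_k$, giving $\lambda_1(N)\napr 0$ for $N=\sgrp(c_1v_1,\dots,c_qv_q)$. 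Linear independence of all the $w_k={}^\circ(c_kv_k)$ over $\bR$ then follows from part (a) applied to $N$ together with Lemma~\ref{integer-indep} (integer independence suffices inside a lattice). No Gram determinants, no Minkowski first or second theorem, no orthogonal projections.

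What each approach buys: the paper's scaling trick is shorter and entirely self-contained within the lemmas already proved. Your route is more geometric and makes the orthogonal structure of $\Lo=U\oplus\Lambda$ explicit, but it imports external machinery. In particular, the estimate $\LV v_q^*\RV\sim\lambda_q(L)$ that you invoke in step (iii) is true but not proved in the paper; it requires Minkowski's second theorem ($\prod_k\lambda_k(L)\sim\det L=\prod_k\LV v_k^*\RV$) together with the bounds $\LV v_k\RV\sim\lambda_k(L)$ of Remark~\ref{rem1}. You should either supply that derivation or, more simply, replace this step by a direct appeal to Lemma~\ref{mink-bas-min}(a) at the jump $\LV v_s\RV\ll\LV v_{s+1}\RV$: if some nonzero $x_{W^\perp}$ were infinitesimal, subtracting a nearest element of $M_s$ from $x$ would produce an infinitesimal vector in $L\sms V_s$, contradicting that lemma. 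Once $\lambda_1(L^\perp)\napr 0$ is secured this way, Minkowski's first theorem gives the non-infinitesimal Gram determinant you need for $\rank\Lambda=t$, and the rest of your argument goes through.
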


\begin{proof}
(a) The equivalence of any of the first two conditions to the discreteness of the
group $\Lo$ is obvious. Similarly, any of the obviously equivalent conditions
$\lambda_1(L) \napr 0$ and $\rank_0(L) = 0$ implies the discreteness of \,$\Lo$.
Otherwise, there is at least one nonzero infinitesimal vector $v \in L$. Then one
can find a  hyperinteger $c \in \sbZ$ such that $cv$ is finite but not infinitesimal.
Obviously, its standard part $w = {}^\co(cv) \ne 0$ belongs to $\Lo$, so that
$\spn(w) = \bR w$ is a line in $\bR^n$. We prove the inclusion $\bR w \sbs \Lo$.
Taking any $x = aw \in \bR w$, with $a \in \bR$, and putting
$b = \lfloor ac\rfloor \in \sbZ$, we have $b \le ac < b + 1$ which, by the virtue
of \,$v \apr 0$, implies $bv \apr acv$. Hence
$$
x = aw \apr acv \apr bv \in \FL\,,
$$
and $x = {}^\co(bv) \in \Lo$. It follows that $\Lo$, containing the line
$\bR w \sbs \bR^n$, is not discrete.

(b) Let $(v_1,\dots,v_m)$ be a Minkowski reduced basis of \,$L$. Denote $p = \rank_0(L)$
and $q = \rankf(L)$. According to Lemma~\ref{mink-bas-min}\,(b), a vector $v_k$ is
infinitesimal if and only if \,$k \le p$, and it is finite if and only if \,$k \le q$.
For the same reason, if \,$x \in L \sms \sgrp(v_1,\dots,v_q)$ then $\LV x\RV \nll v_{q+1}$,
hence $x \nin \FL$. Therefore the standard part $\Lo$ of the internal lattice $L$
coincides with the standard part of its internal sublattice $\sgrp(v_1,\dots,v_q)$.
Due to Lemma~\ref{observe-indep}, there are hyperintegers $c_1,\dots,c_p\in \sbZ$ such
that $c_kv_k \in \FL \sms \IL$ for any $k$ and $c_k$ divides $c_{k-1}$ for $k \ge 2$.
Then the internal sublattice $M = \sgrp(c_1v_1,\dots,c_pv_p) \sbs L$ contains no nonzero
infinitesimal vector. Let us denote $w_k = {}^\co(c_kv_k)$ for $k \le p$, and, additionally,
$c_k = 1$, $w_k = {}^\co v_k = {}^\co(c_k v_k)$ for $p < k \le q$. As a consequence, $\Lo$
coincides with the sum of the linear subspace $\spn(w_1,\dots,w_p)$ and the lattice
$\grp(w_{p+1},\dots,w_q)$.

The proof of (b) will be complete once we establish the following claim.
\smallskip

\noindent
\textbf{Claim.}
\textit{The vectors $w_1,\dots,w_q$ are linearly independent over $\bR$.}
\smallskip

Indeed, let $b \in \sbN$ be any infinite hypernatural number. Put $v_k' = b^{-1} v_k$,
$c_k' = b c_k$ for any $k \le q$. Then all the vectors $v_1',\dots,v_q'$ are infinitesimal
and form a Minkowski reduced basis of the lattice $L' = \bigl\{b^{-1} x: x \in L\bigr\}$.
Now, all the vectors $c_k'v_k' = c_k v_k$, where $k \le q$, are finite but not infinitesimal
and $c_k'$ divides $c_{k-1}'$ for $k \ge 2$. From Lemma~\ref{observe-indep} we infer that the
internal lattice
$$
N = \sgrp(c_1v_1,\dots,c_qv_q) = \sgrp\bigl(c_1'v_1',\dots,c_q'v_q'\bigr)
$$
satisfies $\lambda_1(N) \napr 0$. Then, by (a), its standard part ${}^{\co\!}N$ is a
lattice in $\bR^n$. According to Lemma~\ref{integer-indep}, it suffices to show that
$a_1w_1 + \ldots + a_q w_q = 0$ implies $a_1 = \ldots = a_q = 0$ for any \textit{integers}
$a_1,\dots,a_q \in \bZ$. Since the first equality is equivalent to
$a_1 c_1 v_1 + \dots + a_q c_q v_q \apr 0$ and the left hand vector belongs to $N$,
which contains no infinitesimal vector except for $0$, we have
$a_1 c_1 v_1 + \dots + a_q c_q v_q = 0$,  and the desired conclusion follows
from the linear independence of the vectors $c_1 v_1,\dots,c_q v_q$ over $\sbR$.
\smallskip

(c) follows directly from (a) and (b). 
\end{proof}

Let us record the following direct consequence of (b).

\begin{cor}
Let $L$ be an internal lattice in $\sbR^n$. Then its standard part $\Lo$ is a closed
subgroup of the additive group $\bR^n$.
\end{cor}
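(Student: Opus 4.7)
My plan is to derive the corollary directly from Proposition~\ref{stand-part}(b) by passing to adapted coordinates. Setting $p = \rank_0(L)$ and $q = \rankf(L)$, the proof of (b) has already produced vectors $w_1, \dots, w_q \in \Lo$ that are linearly independent over $\bR$ and satisfy
\[
\Lo = \spn(w_1, \dots, w_p) + \grp(w_{p+1}, \dots, w_q).
\]

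I would then extend $w_1, \dots, w_q$ to a basis $w_1, \dots, w_n$ of \,$\bR^n$ and let $T \: \bR^n \to \bR^n$ be the linear automorphism sending the standard basis vector $e_k$ to $w_k$ for each $k \le n$. Since $T$ is a homeomorphism, it suffices to show that $T^{-1}(\Lo)$ is closed. But the linear independence of $w_1, \dots, w_n$ together with the form of the above decomposition yields
\[
T^{-1}(\Lo) = \bR^p \cx \bZ^{q-p} \cx \{0\}^{n-q},
\]
which is a Cartesian product of closed subgroups of \,$\bR$ and hence is itself a closed subgroup of \,$\bR^n$.

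The main work is really done inside Proposition~\ref{stand-part}(b); once that is available, the corollary reduces to a change of coordinates, so I do not expect any substantive obstacle. The only point worth flagging is that the argument crucially uses the fact that the decomposition in (b) comes from a \emph{single} $\bR$-linearly independent system $w_1, \dots, w_q$, which places the linear-subspace summand and the lattice summand in complementary directions. Without this extra structure, a sum of a closed linear subspace and a discrete subgroup of \,$\bR^n$ need not be closed in general (consider $\bR \cd (1,\sqrt 2) + \bZ \cd (0,1)$ in $\bR^2$), so the closedness of \,$\Lo$ is not simply a formal consequence of the two summands being individually closed.
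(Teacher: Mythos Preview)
Your argument is correct and matches the paper's approach: the paper records the corollary as a ``direct consequence of (b)'' without further details, and your change-of-coordinates spells this out cleanly. The key point you highlight---that one needs the \emph{joint} linear independence of $w_1,\dots,w_q$ from the proof of (b), not merely the abstract direct-sum statement---is exactly right.

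However, the specific counterexample you offer at the end is mistaken. The set $\bR\cdot(1,\sqrt 2) + \bZ\cdot(0,1)$ in $\bR^2$ \emph{is} closed: it is the union over $n\in\bZ$ of the parallel lines $y=\sqrt 2\,x+n$, which are separated by perpendicular distance $1/\sqrt 3$. To witness the phenomenon you have in mind, the lattice part must have rank exceeding the codimension of the linear part inside the joint span. A correct example in $\bR^2$ is
\[
\bR\cdot(1,\sqrt 2) + \bZ^2,
\]
which is a direct sum of abelian groups (the intersection is $\{0\}$ since $\sqrt 2$ is irrational) but is a proper dense subgroup of $\bR^2$, hence not closed. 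This does not affect the validity of your proof---only the illustrative remark needs the fix.
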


\section{An ``almost-near'' result for systems of linear equations}\label{3}

\noindent
We denote by $F^{m \cx n}$ the vector space of all $m \cx n$ matrices over a field $F$.
Unless otherwise said, the vector space $F^n$ consists of column vectors. The transpose
of a matrix $A$ is denoted by $A^\T$. A matrix $A \in \sbR^{m \cx n}$ is called
\textit{finite}, in symbols $A \in \FsR^{m \cx n}$, if all its entries $a_{ij}$ are
finite. Then the matrix $\Ao = \bigl(\ao_{ij}\bigr) \in \bR^{m \cx n}$ is
called the \textit{standard part} of \,$A$. The preservation of addition and
multiplication by the standard part map on $\FsR$ extends to finite matrices, i.e.,
$$
{}^\co(A + B) = \Ao + \Bo
\qquad\text{and}\qquad
{}^\co(A\,C) = \Ao\,\Co
$$
for any $A, B \in \FsR^{m \cx n}$, $C \in \FsR^{n \cx p}$.

The following ``almost-near'' result for solutions of systems of linear equations
will be used in the proof of our first stability Theorem~\ref{stand-part-annih} in
the next section.

\begin{prop}\label{stab-linalg}
Let $A \in \FsR^{m \cx n}$ be any matrix such that its rows are linearly independent over
the field $\sbR$, the standard parts of its rows are linearly independent over \,$\bR$,
and $b \in \FsR^m$. Then, for any $x \in \FsR^n$ satisfying $A\,x \apr b$,
there is a $y \in \sbR^n$ such that $y \apr x$ and $A\,y = b$.
\end{prop}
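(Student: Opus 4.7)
The plan is to reduce the problem to solving $Az = c$ for an infinitesimal right-hand side $c$, then construct $z$ explicitly via a right inverse of $A$ coming from the Moore--Penrose construction $A^\T(A\,A^\T)^{-1}$, and verify that $z$ is infinitesimal.

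First I would set $c = b - A\,x \in \IsR^m$, so that $c \apr 0$ by hypothesis. The task becomes finding $z \in \sbR^n$ with $z \apr 0$ and $A\,z = c$; then $y = x + z$ is the required vector.

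Next I would build the inverse of the Gram matrix $A\,A^\T \in \FsR^{m \cx m}$. By the homomorphy properties of the standard part map on finite matrices, ${}^\co(A\,A^\T) = \Ao\,(\Ao)^\T$. Since the rows of $\Ao$ are linearly independent over $\bR$ by hypothesis, $\Ao$ has full row rank $m$, hence $\Ao\,(\Ao)^\T$ is an invertible real matrix. Consequently $\det(A\,A^\T)$ is finite with ${}^\co\!\det(A\,A^\T) = \det\bigl(\Ao\,(\Ao)^\T\bigr) \ne 0$, i.e., $\det(A\,A^\T) \napr 0$. By the transfer principle applied to the formula expressing the inverse via the adjugate, $A\,A^\T$ is invertible in $\sbR^{m\cx m}$ and $(A\,A^\T)^{-1}$ is again finite with standard part $\bigl(\Ao\,(\Ao)^\T\bigr)^{-1}$.

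Then I would define
$$
z = A^\T (A\,A^\T)^{-1}\,c \in \sbR^n.
$$
The matrix $A^\T (A\,A^\T)^{-1}$ is a product of finite matrices and hence finite; multiplying a finite matrix by an infinitesimal vector yields an infinitesimal vector, so $z \in \IsR^n$, i.e., $z \apr 0$. By construction
$$
A\,z = (A\,A^\T)(A\,A^\T)^{-1}\,c = c = b - A\,x,
$$
so $y = x + z$ satisfies $A\,y = b$ and $y \apr x$, as required.

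There is no real obstacle; the only subtlety is justifying that $A\,A^\T$ is invertible in $\sbR$ \emph{and} that its inverse is finite, both of which follow from the standard-part assumption on the rows and the transfer principle. The hypothesis that the rows of $A$ are themselves linearly independent over $\sbR$ is in fact automatic from the independence of their standard parts (by normalizing any hypothetical nontrivial relation so that its largest coefficient has absolute value $1$ and then taking standard parts), but it is not needed separately in the argument.
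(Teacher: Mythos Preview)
Your proof is correct. In fact the vector $y = x + A^\T(A\,A^\T)^{-1}(b - A\,x)$ you construct is precisely the orthogonal projection of $x$ onto the affine solution set $\{\xi: A\,\xi = b\}$, which is also the paper's choice of $y$; the difference is only in how the infinitesimality of $x - y$ is established. The paper invokes the singular value decomposition $A = P\,D\,Q^\T$ and uses the bound $d_m\,\LV x - y\RV \le \LV A(x-y)\RV \apr 0$, arguing that the smallest singular value $d_m$ is noninfinitesimal because its standard part $\ds_m$ is a singular value of the full-row-rank matrix $\Ao$. You instead use the equivalent fact that the Gram matrix $A\,A^\T$ has noninfinitesimal determinant (since ${}^\co\!\det(A\,A^\T) = \det(\Ao\,(\Ao)^\T) \ne 0$), so its inverse is finite and the Moore--Penrose right inverse $A^\T(A\,A^\T)^{-1}$ carries the infinitesimal vector $c$ to an infinitesimal correction. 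Your route is slightly more elementary in that it bypasses the SVD machinery entirely, at the cost of obscuring the geometric picture; the paper's formulation makes the role of the singular values explicit. Your closing observation that the $\sbR$-independence hypothesis on the rows of $A$ is redundant is also correct.
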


Notice that the vector $y$, being infinitesimally close to the standard vector $x$,
is necessarily finite.

\begin{proof}
The above assumptions guarantee that $m \le n$ and both the systems $A\,\xi = b$,
$\Ao\,\xi = \bo$ indeed have solutions (in $\sbR^n$, $\bR^n$, respectively),
because the internal rank of \,$A$ over $\sbR$, as well as the rank of \,$\Ao$ over $\bR$
are both equal to $m$. We denote by $V$ the orthocomplement of the internal linear
subspace $\{\xi \in \sbR^n: A\,\xi = 0\}$ in $\sbR^n$.

Let $x \in \FsR^n$ satisfy $A\,x \apr b$ and $y \in \sbR^n$ be the orthogonal projection
of \,$x$ to the affine subspace $\{\xi \in \sbR^n: A\,\xi = b\}$ of \,$\sbR^n$. Then
$x - y \in V$ and, of course, $A\,y = b$. It suffices to prove that $x \apr y$.

Let $A = P\,D\,Q^\T$ be the singular value decomposition of $A$. Thus $P \in \sbR^{m \cx m}$,
$Q \in \sbR^{n \cx n}$ are orthogonal matrices and $D$ is a diagonal matrix with the
diagonal formed by the singular values $d_1 \ge \ldots \ge d_m > 0$ of \,$A$. Then
$\Ao = \Po\,\Do\,\Qo^\T$ is the singular value decomposition of \,$\Ao$, and from the
properties of \,$A$ it follows that all the singular values $\ds_1,\dots,\ds_m$ of \,$\Ao$
are still positive, hence all the $d_i$s are noninfinitesimal. The internal linear subspace
$V \sbs \sbR^n$ is spanned by the first $m$ columns of the matrix $Q$, and
$$
d_m \LV v\RV \le \LV A\,v\RV \le d_1 \LV v\RV\,,
$$
holds for each vector $v \in V$
(see, e.g., Han, Neumann \cite{HN}, \S\,5.6, and Bernstein \cite{Bn}, \S\S\,5.6, 9.11).
In particular, since $A\,x \apr b = A\,y$,
$$
d_m \LV x - y\RV \le \LV A(x - y)\RV \apr  0\,,
$$
implying $\LV x - y\RV \apr 0$, i.e., $x \apr y$.
\end{proof}

\section{The ``almost-near'' theorems for dual lattices \\
nonstandard formulation}\label{4}

\noindent
Given an internal lattice $L = \prod L_i\big{/}D$ in $\sbR^n$, its \textit{internal
integral annihilator} can be defined as the ultraproduct of the integral annihilators
of the particular lattices $L_i \sbs \bR^n$ or, equivalently, as the annihilator of
\,$L$ with respect to the set of hyperintegers $\sbZ$. Then the {\L}os Theorem
(Lemma~\ref{Los}) assures that both the objects coincide, i.e.,
$$
\Ansz(L) = \bigl\{u \in \sbR^n: \all x \in L\: ux \in \sbZ\bigr\}
         = \prod \Anz(L_i)\Big{/}D\,.
$$
Similarly, we have a two-fold definition of the \textit{internal dual} of the internal
lattice $L$:
$$
L' = \Ansz(L) \cap \sspn(L) = \prod L_i'\Big{/}D\,.
$$

Using the transfer principle, Lemma~\ref{cov-rad} implies the following transference
relations between the successive minima of an internal lattice $L \sbs \sbR^n$ and the
successive minima and the covering radius, respectively, of its internal dual lattice.

\begin{lem}\label{cov-rad-qual}
Let $L \sbs \bR^n$ be an internal lattice of rank $m$. Then
$$
\lambda_k(L)\,\lambda_{m-k+1}\bigl(L'\bigr) < \infty
$$
for each $k \le m$, and
$$
\lambda_1(L)\,\mu\bigl(L'\bigr) < \infty\,.
$$
\end{lem}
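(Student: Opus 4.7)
The plan is straightforward: this is simply Lemma~\ref{cov-rad} pushed through {\L}os's theorem. We are in the setting where $L = \prod L_i/D$ is an internal lattice of rank $m$, and we may assume (as already noted before Lemma~\ref{mink-bas-min}) that $\rank(L_i) = m$ for every $i \in I$. The paper has already identified the internal dual via the ``two-fold'' characterization $L' = \Ansz(L) \cap \sspn(L) = \prod L_i'\bigl{/}D$, and the successive minima of an internal lattice admit the analogous two-fold characterization $\lambda_k(L) = (\lambda_k(L_i))_{i\in I}/D$. The covering radius $\mu$ is defined by a first-order condition, so the same transfer argument gives $\mu(L') = (\mu(L_i'))_{i\in I}/D$.

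With these identifications in hand, I would apply Lemma~\ref{cov-rad} coordinatewise: for each $i \in I$, the standard inequalities
$$
\lambda_k(L_i)\,\lambda_{m-k+1}\bigl(L_i'\bigr) \le m,
\qquad
\lambda_1(L_i)\,\mu\bigl(L_i'\bigr) \le \tfrac{1}{2}\,m^{3/2}
$$
hold. Since the set of $i \in I$ on which these inequalities hold is all of $I$, hence in $D$, {\L}os's theorem (Lemma~\ref{Los}) yields the corresponding inequalities in $\sbR$:
$$
\lambda_k(L)\,\lambda_{m-k+1}\bigl(L'\bigr) \le m,
\qquad
\lambda_1(L)\,\mu\bigl(L'\bigr) \le \tfrac{1}{2}\,m^{3/2}.
$$
As $m$ is a standard finite positive integer, the right-hand sides are standard real numbers, hence both products lie in $\FsR$, which is exactly the desired conclusion $\lambda_k(L)\,\lambda_{m-k+1}(L') < \infty$ and $\lambda_1(L)\,\mu(L') < \infty$.

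There is essentially no obstacle in the argument: the genuine content lies in the standard transference theorem of Banaszczyk and of Lagarias--Lenstra--Schnorr, which is invoked as Lemma~\ref{cov-rad}, while the ``almost-near'' reformulation only serves to replace the explicit standard bounds by the qualitative finiteness statement convenient for the subsequent nonstandard arguments. The only minor subtlety worth verifying is that $\mu(L')$ has the correct ultraproduct interpretation, and this is a routine transfer of its first-order definition, parallel to the one spelled out for the $\lambda_k$ in the paragraph introducing internal lattices.
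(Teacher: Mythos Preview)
Your proposal is correct and matches the paper's own justification: the paper simply states that the lemma follows from Lemma~\ref{cov-rad} via the transfer principle, without further elaboration. Your write-up makes the {\L}os-theorem computation explicit and flags the routine check that $\mu(L')$ has the expected ultraproduct description, which is entirely in line with the paper's intent.
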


\begin{rem}\label{rem2}
The preceding relations follow already from the considerably weaker estimates than those
in Lemma~\ref{cov-rad}, namely,
$$
\lambda_k(L)\,\lambda_{m-k+1}\bigl(L'\bigr) \le m!\,,
$$
due to Mahler \cite{Mh2}, and the almost obvious observation
$$
\mu\bigl(L'\bigr) \le \frac{1}{2}\,m\,\lambda_m\bigl(L'\bigr)\,,
$$
which jointly imply
$$
\lambda_1(L)\,\mu\bigl(L'\bigr) \le \frac{1}{2}\,m\,m!\,.
$$
Yet weaker estimates $\lambda_k(L)\,\lambda_{m-k+1}\bigl(L'\bigr) \le (m!)^2$ (see
Gruber-Lekkerkerker \cite{GL}, p.~125) are still sufficient (cf.~Remark~\ref{rem1}).
\end{rem}

As first we prove an infinitesimal version of the ``almost-near'' result for integral
annihilators of internal lattices.

\begin{thm}\label{stand-part-annih}
Let $L \sbs \sbR^n$ be an internal lattice. Then for each $x \in \FsR^n$, such that
$\lv u\,x\rv_\bZ \apr 0$ for every finite $u \in L$, there is a $y \in \Ansz(L)$
such that $y \apr x$.
\end{thm}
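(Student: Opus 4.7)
The plan is to build $y$ in two stages corresponding to the ``short'' and ``long'' directions of the internal lattice $L$. First I reduce, without loss of generality, to the case $x \in \sspn(L)$: writing $x = x_1 + x_2$ with $x_1 \in \sspn(L)$ and $x_2 \perp \sspn(L)$, I have $u\,x = u\,x_1$ for every $u \in L$, so the hypothesis transfers to $x_1$; and since $\Ansz(L) = L' + \sspn(L)^\perp$, it suffices to find $y_1 \in L'$ with $y_1 \apr x_1$ and take $y = y_1 + x_2$. I then fix a Minkowski reduced basis $(v_1,\dots,v_m)$ of \,$L$, set $p = \rank_0(L)$ and $q = \rankf(L)$, and let $c_1,\dots,c_p \in \sbZ$ be the infinite hyperintegers supplied by Lemma~\ref{observe-indep} making $c_k v_k \in \FL \sms \IL$; and $c_k = 1$ for $p < k \le q$.

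Stage one constructs $y_0 \in \sspn(L)$ with $y_0 \apr x$ and $v_k \cdot y_0 \in \sbZ$ for all $k \le q$. The matrix $A \in \FsR^{q \cx n}$ with rows $(c_k v_k)^\T$ has its rows linearly independent over $\sbR$ and their standard parts linearly independent over $\bR$ (by the claim inside the proof of Proposition~\ref{stand-part}\,(b)), while the hypothesis gives $c_k v_k \cdot x \apr n_k^c$ for certain standard integers $n_k^c \in \bZ$. The key \emph{divisibility observation} is that $n_k^c = 0$ whenever $k \le p$: given any standard prime $p^*$, the hyperinteger $a = \lceil c_k/p^*\rceil$ makes $u = a v_k \in L$ finite (since $a\,\LV v_k\RV \le \LV c_k v_k\RV/p^* + \LV v_k\RV < \infty$), and a short computation gives $u \cdot x \apr n_k^c/p^*$, whence $\lv u\,x\rv_\bZ \apr 0$ forces $p^* \mid n_k^c$; ranging $p^*$ over all standard primes leaves $n_k^c = 0$. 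Proposition~\ref{stab-linalg} applied to $A$ and $(n_1^c,\dots,n_q^c) \in \bZ^q$ then yields $y_0 \apr x$ with $c_k v_k \cdot y_0 = n_k^c$, i.e., $v_k \cdot y_0 = 0 \in \sbZ$ for $k \le p$ and $v_k \cdot y_0 = n_k^c \in \bZ$ for $p < k \le q$. Because the orthogonal projection provided by Proposition~\ref{stab-linalg} places $y_0 \in x + \sspn(v_1,\dots,v_q) \sbs \sspn(L)$, we also have $y_0 \in \sspn(L)$.

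Stage two corrects the integer conditions for $k > q$. Put $V_q = \sspn(v_1,\dots,v_q)$, $W = V_q^\perp \cap \sspn(L)$ and $e_k = \pi_W(v_k)$ for $k > q$; these $e_k$ form a basis of the projected lattice $L_W := \pi_W(L) = \sgrp(e_{q+1},\dots,e_m)$ in $W$. Replacing any $u \in L \sms V_q$ by $u - \sum_{k \le q} n_k v_k$ for suitable $n_k \in \bZ$ bounds its $V_q$-component in norm by $\sum_{k \le q}\LV v_k\RV < \infty$ without changing $\pi_W(u)$, so Lemma~\ref{mink-bas-min}\,(a) applied at index $q$ (where $\LV v_q\RV \ll \LV v_{q+1}\RV$ since one norm is finite and the other infinite) forces $\LV \pi_W(u)\RV \nll \LV v_{q+1}\RV \sim \infty$. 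Hence $\lambda_1(L_W) \sim \infty$, and Lemma~\ref{cov-rad-qual} applied inside $W$ gives $\mu(L_W') \apr 0$. Now pick the unique $\Delta_0 \in W$ with $e_k \cdot \Delta_0 = -v_k \cdot y_0$ for every $k > q$, and adjust by an element of $L_W'$ to obtain $\Delta \in \Delta_0 + L_W'$ with $\LV \Delta\RV \le \mu(L_W') \apr 0$. Then $y := y_0 + \Delta$ satisfies $v_k \cdot y = v_k \cdot y_0 \in \sbZ$ for $k \le q$ (because $\Delta \perp V_q$) and $v_k \cdot y = v_k \cdot y_0 + e_k \cdot \Delta \in \sbZ$ for $k > q$ (by the choice of $\Delta$), so $y \in L' \sbs \Ansz(L)$; and $y \apr x$ because both $y_0 - x$ and $\Delta$ are infinitesimal.

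The main obstacles are the divisibility observation $n_k^c = 0$ for $k \le p$ and the infiniteness of $\lambda_1(L_W)$: the first exploits that the hypothesis applies to a whole family of finite multiples of $v_k$ parametrized by standard primes, while the second relies on Minkowski reduction preventing any nonzero coset of $L$ modulo $L \cap V_q$ from admitting a representative of finite norm once its $V_q$-component has been absorbed.
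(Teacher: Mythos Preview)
Your proof is correct. Stage one matches the paper's opening move almost exactly: both form the matrix $A$ with rows $c_1v_1,\dots,c_pv_p,v_{p+1},\dots,v_q$ and invoke Proposition~\ref{stab-linalg} against a vector $b \in \bZ^q$ whose first $p$ entries vanish. The paper simply asserts that $c_kv_k\cdot x \apr 0$ for $k \le p$ as part of an ``if and only if'' statement, whereas you supply an explicit divisibility argument (testing $u = \lceil c_k/p^*\rceil\,v_k$ against every standard prime $p^*$); this is a welcome clarification of a step the paper leaves to the reader. One minor slip: the coefficients $n_k$ in your reduction ``$u - \sum_{k\le q} n_k v_k$'' must be taken in $\sbZ$, not $\bZ$, since the $V_q$-coordinates of $\pi_{V_q}(u)$ need not be finite.

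Stage two is where your argument genuinely diverges from the paper's. The paper partitions the infinite basis vectors $v_{q+1},\dots,v_m$ into archimedean blocks $q = q_0 < q_1 < \dots < q_t = m$ and corrects $y$ one block at a time, each step locating short dual vectors $w_1,\dots,w_d$ in the dual of the truncated lattice $M = \sgrp(v_1,\dots,v_{q_s})$ and solving a $d\times d$ integer linear system. You instead project once onto $W = V_q^\perp \cap \sspn(L)$, observe via Lemma~\ref{mink-bas-min}\,(a) that the projected lattice $L_W$ has $\lambda_1(L_W)$ infinite, and conclude from Lemma~\ref{cov-rad-qual} that $\mu(L_W') \apr 0$, which lets you absorb all the corrections for $k>q$ in a single infinitesimal $\Delta \in W$. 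Your route is shorter and conceptually cleaner: it replaces an induction of length $t$ (and the attendant integer-matrix bookkeeping) by one application of the covering-radius transference bound, exploiting that the archimedean block structure among the infinite $v_k$ is irrelevant once everything has been pushed into $W$. The paper's iterative scheme, on the other hand, stays closer to the original basis and never needs the projected lattice $L_W$ to be internal, but this buys nothing here since $\pi_W$ is an internal map.
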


\begin{proof}
Let $\bbeta = (v_1,\dots,v_m)$ be a Minkowski reduced basis of \,$L$, and
$0 \le p \le q \le m$ be natural numbers such that $v_1, \dots, v_p$ are all the
infinitesimal vectors in $\bbeta$ and $v_1,\dots,v_q$ are all the finite vectors
in $\bbeta$. Recalling Proposition~\ref{stand-part}\,(b) and its proof, there are
hyperintegers $c_1,\dots, c_p \in \sbZ$ such that the vectors
$c_1 v_1, \dots, c_p v_p \in L$ are finite and noninfinitesimal and each finite
vector $u \in L$ is infinitesimally close to a vector of the form
$$
(a_1 c_1 v_1 + \ldots + a_p c_p v_p) + (a_{p+1} v_{p+1} + \ldots + a_q v_q)\,,
$$
where $a_1,\dots,a_p \in \bR$ and $a_{p+1}, \dots, a_q \in \bZ$.

Let us form the matrix with columns $c_1 v_1,\dots,c_p v_p, v_{p+1}, \dots, v_q$, and
denote by $A \in \FsR^{q \cx n}$ its transpose. Then an $x \in \FsR^n$ satisfies the
condition $\lv u\,x\rv_\bZ \apr 0$ for each finite $u \in L$ if and only if
\,$c_k v_k\,x \apr 0$ for $k \le p$, and $\lv v_k\,x\rv_\bZ \apr 0$ for $p < k \le q$.
Assume that $u$ satisfies this condition and put
$b = \bigl(0,\dots,0, {}^\co(v_{p+1} x), \dots, {}^\co(v_q x)\bigr)^\T$.
Then $b \in \bZ^q$ and $x$ satisfies $A\,x \apr b$. By the virtue of
Proposition~\ref{stab-linalg}, there is a $y \in \FsR^n$ such that $y \apr x$ and
$A\,y = b$. Then, however, $v_k\,y = b_k = 0$ for $k \le p$, and $v_k\,y = b_k \in \bZ$
for $p < k \le q$. If \,$q = m$, we are done. Otherwise there exists a sequence of
integers $q = q_0 < q_1 < \ldots < q_t = m$ such that
$$
\bigl\|v_{q_{s-1}}\bigr\| \ll \bigl\|v_k\bigr\| \sim \bigl\|v_{q_s}\bigr\|
$$
for all $s$, $k$ satisfying $1 \le s \le t$, $q_{s-1} < k \le q_s$.

We are going to construct a sequence of vectors
$y^{(0)} = y,\, y^{(1)}, \dots, y^{(t)} \in \FsR^n$, such that $y^{(s)} \apr x$ and
$v_k\,y^{(s)} \in \sbZ$ for any $s \le t$, $k \le q_s$. Then already $v\,y^{(t)} \in \sbZ$
for every $v \in L$, as required. This will be achieved by an inductive argument.
Obviously, to this end it is enough to prove the following
\smallskip

\noindent
\textbf{Claim.}
\textit{Let \,$0 \le s < t$ and $z \in \FsR^n$ be a vector such that $v_k\,z \in \sbZ$
for any $k \le q_s$. Then there is a $z' \in \FsR^n$ such that $z' \apr z$ and
$v_k\,z' \in \sbZ$ for any $k \le q_{s+1}$.}
\smallskip

Let us denote $q' = q_s$, $q'' = q_{s+1}$, $d = q'' - q' > 0$, for typographical reasons,
and form the internal lattice $M = \sgrp(v_1,\dots,v_{q''}) \sbs L$, as well as the
internal linear subspace $V = \sspn(M) = \sspn(v_1,\dots,v_{q''}) \sbs \sbR^n$. According
to Lemma~\ref{mink-bas-min}\,(b) and Lemma~\ref{cov-rad-qual} we know that
$$
\LV v_k\RV \sim \lambda_k(M)
\quad\text{and}\quad
\lambda_k(M)\,\lambda_{q''-k+1}\bigl(M'\bigr) < \infty
$$
whenever $q' < k \le q''$. Putting both the relations together, for $k = q' + 1$
we particularly get
$$
\LV v_{q'+1}\RV\,\lambda_d\bigl(M'\bigr) < \infty\,.
$$
Realizing that the vectors $v_k$, for $q < k \le m$, are infinite, we see that
$\lambda_d\bigl(M'\bigr) \apr 0$. Thus there are vectors $w_1, \dots, w_d \in M'$,
linearly independent over $\sbR$ such that $\LV w_j\RV \le \lambda_d\bigl(M'\bigr)$
for $j \le d$; in particular, all the vectors $w_j$ are infinitesimal.

We will search for the vector $z'$ in the form
$$
z' = z + \alpha_1 w_1 + \ldots + \alpha_d w_d
$$
with unknown coefficients $\alpha_1,\dots,\alpha_d \in \FsR$. This will automatically
guarantee that $z' \apr z$.

As $\LV v_k\RV \ll \LV v_{q'+1}\RV$, for any $k \le q'$, $j \le d$, we have
$\LV v_k\RV \ll \LV v_{q'+1}\RV$ and
$$
\lv v_k\,w_j\rv \le \LV v_k\RV \LV w_j\RV \le
\frac{\LV v_k\RV}{\LV v_{q'+1}\RV}\,\LV v_{q'+1}\RV\,\lambda_d\bigl(M'\bigr)
\apr 0\,.
$$
At the same time, $v_k\,w_j \in \sbZ$, hence $v_k\,w_j = 0$, and
$$
v_k\,z' = v_k\,z + \sum_{j=1}^d \alpha_j v_k\,w_j = v_k\,z \in \sbZ\,,
$$
regardless of the choice of \,$\alpha_1,\dots\alpha_d$. Moreover, denoting
$h\:\sbR^n \to \sbR^d$ the $\sbR$-linear mapping given by
$H(\xi) = (\xi\,w_1, \dots, \xi\,w_d)^\T$ for $\xi \in \sbR^n$, we can conclude
that the vectors $v_1,\dots,v_{q'}$ form a basis of the linear subspace
$V \cap \Ker h \sbs \sbR^n$. Indeed, as the vectors $w_1,\dots,w_d$ are linearly
independent, $\Ker h$ has dimension $n - d$ and it equals the direct sum of the
orthocomplement $V^\perp$ with dimension $n - q''$ and $V \cap \Ker h$. Then
the latter necessarily has dimension $(n-d)-(n-q'') = q'$.

On the other hand, for $q' < k \le q''$, $j \le d$, we still have
$\LV v_k\RV \sim \LV v_{q'+1}\RV$ and
$$
\lv v_k\,w_j\rv \le \LV v_k\RV \LV w_j\RV \le
\frac{\LV v_k\RV}{\LV v_{q'+1}\RV}\,\LV v_{q'+1}\RV\,\lambda_d\bigl(M'\bigr)
< \infty\,,
$$
hence each $v_k\,w_j$ is a finite integer, and $h(v_k) \in \bZ^d$ for any $k$.
Since the vectors $v_1,\dots,v_{q'},v_{q'+1},\dots,v_{q''}$ are linearly independent
over $\sbR$ and the first $q'$ from among them form a basis of \,$V \cap \Ker\psi$,
the vectors $h(v_{q'+1}), \dots, h(v_{q''})$ are linearly independent over $\sbR$,
as well. Then the matrix $B = (b_{ij}) \in \sbR^{d \cx d}$ with entries
$b_{ij} = v_{q'+i}\,w_j \in \bZ$ satisfies $0 \napr\det B \in \bZ$. It follows that
$B$ is strongly regular and $B^{-1}$ is finite. Thus denoting
$\omega = (\omega_1,\dots,\omega_d)^\T \in \FsR^d$ the vector with coordinates
$\omega_j = v_{q'+i}\,z - \lfloor v_{q'+i}\,z\rfloor$ (i.e., the fractional parts
of the inner products $v_{q'+i}\,z$), for $i \le d$, the system $B\,\eta = -\omega$
has a unique solution
$\alpha = (\alpha_1, \dots, \alpha_d)^\T = -B^{-1}\,\omega\in \FsR^d$,
which means that
$$
\sum_{j=1}^d v_{q'+i}\,w_j\,\alpha_j = -\omega_i
$$
for each $i \le d$. Taking any $q' < k \le q''$ and putting $i = k - q'$, now, the
following computation
\begin{align*}
v_k\,z' &= v_k\,z + \sum_{j=1}^d \alpha_j v_{q'+i}\,w_j
= v_k\,z + \sum_{j=1}^d b_{ij} \alpha_j \\
&= v_{q'+i}\,z - \omega_i = \lfloor v_{q'+i}\,z\rfloor \in \sbZ
\end{align*}
concludes the proof of the Claim, henceforth of the Theorem, too.
\end{proof}

\begin{cor}\label{stand-part-annih-cor}
Let $L \sbs \sbR^n$ be an internal lattice. Then
$$
{}^\co(\Ansz L) = \Anz\bigl(\Lo\bigr)\,,
$$
in other words, the standard part of the internal integral annihilator $\Ansz L$
of \,$L$ equals the integral annihilator of the standard part $\Lo$ of \,$L$.
\end{cor}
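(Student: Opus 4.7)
The plan is to prove the two inclusions separately, with the nontrivial one being an essentially immediate consequence of Theorem~\ref{stand-part-annih}.

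For the easy inclusion ${}^\co(\Ansz L) \sbs \Anz(\Lo)$, I would start with an arbitrary $y \in {}^\co(\Ansz L)$, writing $y = \uo$ for some finite $u \in \Ansz L$. Given any $z \in \Lo$, pick a finite $v \in L$ with $z = \vo$. Since $u \in \Ansz L$, the inner product $u\,v$ is a hyperinteger; because both $u$ and $v$ are finite, $u\,v$ is a finite hyperinteger, hence standard (its standard part is an ordinary integer). Using the homomorphy of the standard part map on $\FsR$, I then get $y\,z = \uo\,\vo = {}^\co(u\,v) \in \bZ$. Since $z \in \Lo$ was arbitrary, $y \in \Anz(\Lo)$.

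For the reverse inclusion $\Anz(\Lo) \sbs {}^\co(\Ansz L)$, let $y \in \Anz(\Lo) \sbs \bR^n$; as a standard vector, $y$ is finite. The plan is to verify the hypothesis of Theorem~\ref{stand-part-annih} for $x = y$. Take any finite $v \in L$; then $\vo \in \Lo$, so $y\,\vo \in \bZ$. Because $y$ is finite standard and $v - \vo$ is infinitesimal, $y\,v - y\,\vo = y\,(v - \vo) \apr 0$, so $y\,v$ is infinitesimally close to the integer $y\,\vo$, which yields $\lv y\,v\rv_\bZ \apr 0$. Theorem~\ref{stand-part-annih} now provides a $u \in \Ansz L$ with $u \apr y$, and then $y = \uo \in {}^\co(\Ansz L)$.

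Both directions are routine once Theorem~\ref{stand-part-annih} is in hand, so there is no real obstacle; the corollary is essentially a restatement packaging that theorem together with the trivially verified easy inclusion.
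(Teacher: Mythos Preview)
Your proof is correct and follows essentially the same approach as the paper's: the easy inclusion uses that a finite hyperinteger is an ordinary integer together with the homomorphy of the standard part map, and the nontrivial inclusion verifies the hypothesis of Theorem~\ref{stand-part-annih} for the standard vector $y$ and reads off the conclusion. The only differences are notational.
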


\begin{proof}
The inclusion $\Anz(\Lo) \sbs {}^\co(\Ansz L)$ is a direct consequence of the
last Theorem. Indeed, if \,$x \in \Anz(\Lo)$ then $x\,\uo \in \bZ$ for every finite
$u \in L$. Then $\lv x\,u\rv_\bZ \apr 0$, for any such a $u$, and, by
Theorem~\ref{stand-part-annih}, there is a $y \in \Ansz(L)$, such that $y \apr x$,
hence $x \in {}^\co(\Ansz L)$.

The reversed inclusion ${}^\co(\Ansz L) \sbs \Anz(\Lo)$ is easy anyway. It suffices to
show that $\xo \in \Anz\bigl(\Lo\bigr)$ for any finite $x \in \Ansz(L)$. Taking any
finite $u \in L$, the inner product $u\,x$ is finite and belongs to $\sbZ$, hence
$$
\uo\,\xo = {}^\co(u\,x) = u\,x \in \bZ\,,
$$
so that $\xo \in \Anz\bigl(\Lo\bigr)$, as required.
\end{proof}

The following is the nonstandard formulation of the  announced ``almost-near''
result for dual lattices.

\begin{thm}\label{stab-dual-latt-ns}
Let  $L \sbs \sbR^n$ be an internal lattice. Then for each finite vector $x \in \sspn(L)$,
such that $\lv u\,x\rv_\bZ \apr 0$ for every finite $u \in L$, there is a $y \in L'$
such that $y \apr x$.
\end{thm}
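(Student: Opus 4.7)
The plan is to reduce Theorem~\ref{stab-dual-latt-ns} to Theorem~\ref{stand-part-annih} by means of an orthogonal projection onto $\sspn(L)$. The hypotheses on $x$ are precisely those needed to apply Theorem~\ref{stand-part-annih}, which yields a vector $y_0 \in \Ansz(L)$ with $y_0 \apr x$. The only thing missing is the condition $y_0 \in \sspn(L)$; the task is therefore to produce, from $y_0$, a vector in the internal dual $L' = \Ansz(L) \cap \sspn(L)$ still infinitesimally close to $x$.

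First, I would invoke Theorem~\ref{stand-part-annih}: since $x \in \FsR^n$ and $\lv u\,x\rv_\bZ \apr 0$ for every finite $u \in L$, there is $y_0 \in \Ansz(L)$ with $y_0 \apr x$. In particular $y_0 \in \FsR^n$. Second, let $P\:\sbR^n \to \sbR^n$ denote the internal orthogonal projection onto the internal linear subspace $\sspn(L)$; this is well defined because $\sspn(L)$ has finite internal dimension $m \le n$, and $P$ has operator norm $\le 1$, so it maps $\FsR^n$ into $\FsR^n$ and preserves infinitesimal nearness. Set $y = P\,y_0$.

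The verification that $y$ has the required properties is then routine. Since $x \in \sspn(L)$, we have $Px = x$, and therefore
$$
\LV y - x\RV = \LV P\,y_0 - P\,x\RV \le \LV y_0 - x\RV \apr 0\,,
$$
so $y \apr x$. Next, for any $u \in L$ we have $u \in \sspn(L)$, hence $Pu = u$; by selfadjointness of $P$,
$$
u\,y = u\,(P\,y_0) = (P\,u)\,y_0 = u\,y_0 \in \sbZ\,,
$$
since $y_0 \in \Ansz(L)$. Thus $y \in \Ansz(L)$. Finally, $y \in \sspn(L)$ by construction, so $y \in \Ansz(L) \cap \sspn(L) = L'$, completing the proof.

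There is no real obstacle here: all of the work has been absorbed into Theorem~\ref{stand-part-annih}. The only point worth noting is that the orthogonal projection onto $\sspn(L)$ must be treated as an \emph{internal} object (e.g., defined via the Gram matrix of a Minkowski reduced basis of $L$ transferred from the factors) so that the transfer principle applies and the estimate $\LV P\RV \le 1$ is available; this is automatic because $\sspn(L)$, being internally generated by a finite basis of $L$, is an internal subspace of $\sbR^n$.
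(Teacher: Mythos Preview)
Your proof is correct and essentially identical to the paper's own argument: both apply Theorem~\ref{stand-part-annih} to obtain a vector in $\Ansz(L)$ infinitesimally close to $x$, then project it orthogonally onto $\sspn(L)$, using self-adjointness of the projection to preserve membership in $\Ansz(L)$ and the nonexpansiveness of the projection to preserve infinitesimal nearness. The only difference is notational (the paper writes $y_V$ for your $P\,y_0$) and that you spell out the internality of the projection a bit more explicitly.
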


\begin{proof}
Let $V = \sspn(L) \sbs \sbR^n$ and $z_V$ denote the orthogonal projection of any
$z \in \sbR^n$ to $V$. Then $\LV z_V\RV \le \LV z\RV$ for any~$z$. According to
Therorem~\ref{stand-part-annih}, under the above assumptions there is a
$y \in \Ansz(L)$ such that $y \apr x$. Then $v\,y_V = v\,y \in \sbZ$ for every
$v \in L$, i.e., $y_V \in L'$. As $x_V = x$ and $z \mapsto z_V$ is a linear map,
$$
\LV x - y_V \RV = \LV x_V - y_V \RV = \LV(x - y)_V\RV \le \LV x - y\RV \apr 0\,,
$$
hence $y_V \apr x$.
\end{proof}

The last stability Theorem is equivalent to the inclusion
$\bigl(\Lo\bigr)' \sbs {}^\co\bigl(L'\bigr)$ for internal lattices $L\sbs \sbR^n$.
In view of Corollary~\ref{stand-part-annih-cor} the reader might expect that also
the reversed inclusion ${}^\co\bigl(L'\bigr) \sbs \bigl(\Lo\bigr)'$ is satisfied
(and even easy to prove). However, as shown by following example, this is not true
in general.

\begin{ex}
Let $c \in \bR$ be positive and $d \in \sbR$ be positive and infinite. Consider the full
rank internal lattice
$$
L = c\,\sbZ \cx d\,\sbZ =
\bigl\{(a c, b d)^\T: a, b \in \sbZ\bigr\}
$$
in $\sbR^2$. Then, as easily seen, its standard part is a rank 1 lattice
$\Lo = c\,\bZ \cx \{0\}$ in $\bR^2$, while its internal dual is the full
rank internal lattice $L' = c^{-1} \sbZ \cx d^{-1} \sbZ$ in $\sbR^2$.
Then $\bigl(\Lo\bigr)' = c^{-1} \bZ \cx \{0\}$ is a rank~1 lattice in
$\bR^2$ while ${}^\co\bigl(L'\bigr) = c^{-1} \bZ \cx \bR$ is not even a lattice.
\end{ex}

\section{The ``almost-near'' theorem for dual lattices \\
standard formulation}\label{5}

\noindent
In this final section we state and prove the announced standard version of the stability
theorem for dual lattices, strengthening the preliminary Theorem~\ref{prelim}. It is in
fact a standard equivalent of Theorem~\ref{stab-dual-latt-ns}. In its proof we will need
the following last nonstandard lemma.

\begin{lem}\label{delta-apr0}
Let $L \sbs \sbR^n$ be an internal lattice and $G \sbs L$ be any additive subgroup of \,$L$.
Let further $\delta < \frac{1}{3}$ be a positive real number and $x \in \sbR^n$ be a vector
such that $\lv u\,x\rv_{\sbZ} \le \delta$ for every $u \in G$. Then
$\lv u\,x\rv_{\sbZ} \apr 0$ for every $u \in G$.
\end{lem}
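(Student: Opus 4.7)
\noindent
The plan is a contradiction argument that exploits the fact that $G$ is a subgroup, so every standard-integer multiple $ku$ with $k \in \bZ$ lies in $G$ and thus still satisfies $\lv ku\,x\rv_{\sbZ} \le \delta$. Assume some $u \in G$ has $\alpha := \lv u\,x\rv_{\sbZ} \napr 0$; then $\alpha$ admits a positive standard lower bound $\eta$, while of course still $\alpha \le \delta$. Writing $u\,x = N + \alpha'$ with $N \in \sbZ$ the nearest hyperinteger and $\alpha' = \pm\alpha$ (unique since $\alpha < 1/2$), we get $ku\,x = kN + k\alpha'$, so, since $kN \in \sbZ$ and the distance to $\sbZ$ is an even function, $\lv ku\,x\rv_{\sbZ} = \lv k\alpha\rv_{\sbZ}$ for every $k \in \bZ$.

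The key step is to exhibit a \emph{standard} positive integer $k$ with $\lv k\alpha\rv_{\sbZ} > \delta$, contradicting $ku \in G$. I would take $k = \lfloor \delta/\alpha\rfloor + 1$, the smallest positive integer with $k\alpha > \delta$. The bound $k \le \delta/\eta + 1$ shows that $k$ is standard, hence genuinely in $\bZ$. By minimality, $(k-1)\alpha \le \delta$, so $k\alpha \le \delta + \alpha \le 2\delta$. Here the hypothesis $\delta < 1/3$ enters decisively: it gives $2\delta < 1 - \delta$, placing $k\alpha$ in the open interval $(\delta,\,1-\delta)$; thus $\lv k\alpha\rv_{\sbZ} = \min(k\alpha,\,1-k\alpha) > \delta$, which is the required contradiction.

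The only real obstacle is pinpointing the role of $\delta < 1/3$: this is exactly what prevents increments of size $\alpha \le \delta$ from leaping over the ``forbidden middle zone'' $(\delta,\,1-\delta)$ directly into the wrap-around region $[1-\delta,\,1)$, where the distance-to-$\sbZ$ function becomes small again. The standardness of the chosen $k$ (needed to keep $ku$ inside $G$ via the subgroup property applied with standard integers) follows from the standard lower bound $\eta$ on $\alpha$. Everything else is routine distance-to-nearest-integer bookkeeping.
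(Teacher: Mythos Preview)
Your proof is correct and rests on the same idea as the paper's: the subgroup property forces the distances $\lv u\,x\rv_{\sbZ}$ to be infinitesimal, because otherwise a suitable integer multiple would push the distance into the forbidden zone $(\delta,\,1-\delta)$, which is nonempty precisely when $\delta < \tfrac13$. The paper phrases this more abstractly---it observes that the image $G\,x$ is a subgroup of $\sbR$ contained in $\sbZ + {}^*[-\delta,\delta]$, and asserts (without proof) that the largest such subgroup is $\sbZ + \IsR$---whereas you unpack this assertion concretely via the explicit choice $k = \lfloor \delta/\alpha\rfloor + 1$ and the bound $k \le \delta/\eta + 1$ that pins $k$ down as a standard integer. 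So your argument is essentially the detailed verification of the claim the paper leaves implicit.
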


\begin{proof}
As the mapping $u \mapsto u\,x$ is an additive group homomorphism $L \to \sbR$, the image
$G\,x = \{u\,x: u \in G\}$ of the subgroup $G \sbs L$ under this map must be a subgroup
of \,$\sbR$. However, if \,$0 < \delta < \frac{1}{3}$ is a (standard) real number, then
$\sbZ + \IsR$ is the biggest subgroup of \,$\sbR$ satisfying the inclusion
$\sbZ + \IsR \sbs \sbZ + {}^*[-\delta, \delta]$.
\end{proof}

Recall that $B = \{x \in \bR^n: \LV x\RV \le 1\}$ denotes the (euclidean) unit ball
in $\bR^n$.

\begin{thm}\label{stab-dual-latt-st}
Let $n \ge 1$ be an integer and $\delta < \frac{1}{3}$, $\eps$, $\lambda$ be positive reals.
Then there exists a real number $r > 0$, depending just on \,$n$, $\delta$, $\eps$ and
$\lambda$, such that every lattice $L \sbs \bR^n$, subject to $\lambda_1(L) \ge \lambda$, 
satisfies the following condition:

For any $x \in \spn(L)$, such that $\lv u\,x\rv_\bZ \le \delta$ for all $u \in L \cap r B$,
there is a $y \in L'$ such that $\LV x- y\RV \le \eps$.
\end{thm}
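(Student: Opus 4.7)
The plan is to derive Theorem~\ref{stab-dual-latt-st} from its nonstandard counterpart Theorem~\ref{stab-dual-latt-ns} by an ultraproduct compactness argument. Fix $n$, $\delta < \frac{1}{3}$, $\eps$ and $\lambda$ and suppose, toward a contradiction, that no such $r$ works. Then for each positive integer $k$ I would extract a lattice $L_k \sbs \bR^n$ with $\lambda_1(L_k) \ge \lambda$ and a vector $x_k \in \spn(L_k)$ such that $\lv u\,x_k\rv_\bZ \le \delta$ for every $u \in L_k \cap k B$, yet $\LV x_k - y\RV > \eps$ for every $y \in L_k'$.

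Before passing to the ultraproduct, I would normalise $x_k$ within its coset modulo $L_k'$. Both the hypothesis and the negated conclusion are invariant under the shift $x_k \mapsto x_k - z$, $z \in L_k'$, since $u\,z \in \bZ$ for every $u \in L_k$. Choosing the coset representative of smallest norm gives $\LV x_k\RV \le \mu(L_k')$, and Lemma~\ref{cov-rad} together with $\lambda_1(L_k) \ge \lambda$ yields the uniform bound
$$
\LV x_k\RV \le \mu(L_k') \le \frac{n^{3/2}}{2\,\lambda}\,,
$$
depending only on $n$ and $\lambda$. This step is what will make the hyperreal vector $x$ formed below finite.

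Fix a nonprincipal ultrafilter $D$ on $I = \{1,2,\dots\}$ and set $L = \prod L_k\big{/}D \sbs \sbR^n$ (an internal lattice with $\lambda_1(L) \ge \lambda$, hence $\lambda_1(L) \napr 0$, by transfer) and $x = (x_k)/D \in \sbR^n$ (finite by the previous step, and lying in $\sspn(L)$ by {\L}os' Theorem). To verify the hypothesis of Theorem~\ref{stab-dual-latt-ns}, I would take any finite $u \in L$ and choose $R \in \bR$ with $\LV u\RV \le R$; for $D$-almost every $k \ge R$ one has $u_k \in L_k \cap k B$ and hence $\lv u_k\,x_k\rv_\bZ \le \delta$, so {\L}os' Theorem gives $\lv u\,x\rv_\bZ \le \delta$. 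Applying Lemma~\ref{delta-apr0} to the additive subgroup $\FL = L \cap \FsR^n$ of finite vectors in $L$ (this is where the condition $\delta < \frac{1}{3}$ enters) upgrades this to $\lv u\,x\rv_\bZ \apr 0$ for every finite $u \in L$.

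At this point Theorem~\ref{stab-dual-latt-ns} produces $y \in L'$ with $y \apr x$. Writing $y = (y_k)/D$ with $y_k \in L_k'$ for $D$-almost every $k$, the relation $\LV x - y\RV \apr 0$ forces the set $\{k \in I : \LV x_k - y_k\RV \le \eps\}$ to lie in $D$, hence to be nonempty, contradicting the defining property of the sequence $(L_k, x_k)$. I expect the main obstacle to be the covering-radius normalisation: without it the ultraproduct vector $x$ could be infinite and Theorem~\ref{stab-dual-latt-ns} would not apply. It is precisely the role of the hypothesis $\lambda_1(L) \ge \lambda$ in the standard theorem to make this normalisation uniform via Lemma~\ref{cov-rad}.
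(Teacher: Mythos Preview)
Your proposal is correct and follows essentially the same route as the paper's proof: contradiction, ultraproduct, Lemma~\ref{delta-apr0} to upgrade the $\delta$-bound to infinitesimal nearness, and Theorem~\ref{stab-dual-latt-ns} for the final contradiction. The only (cosmetic) difference is that you perform the covering-radius shift on each $x_k$ in $\bR^n$ \emph{before} passing to the ultraproduct, whereas the paper shifts the ultraproduct vector $x$ by an element $z \in L'$ \emph{after} the fact using Lemma~\ref{cov-rad-qual}; both manoeuvres serve exactly the same purpose of producing a finite representative to which Theorem~\ref{stab-dual-latt-ns} applies.
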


\begin{proof}
Assume that the conclusion of the Theorem fails for some fixed quadruple of admissible
parameters $n$, $\delta$, $\eps$, $\lambda$. This is to say that for each real number
$r > 0$ there is a lattice $L_r \sbs \bR^n$, satisfying $\lambda_1(L_r) \ge \lambda$,
and an $x_r \in \spn(L)$ such that $\lv u\,x\rv_\bZ \le \delta$ for every
$u \in L_r \cap r B$, however $\LV x_r - y\RV > \eps$ for any $y \in L'$, i.e.,
$(x_r + \eps B) \cap L_r' = \emptyset$. Let us confine to the values of \,$r$
from the set $I = \{1,2,3,\dots\}$ of all positive integers.

Let us pick any nonprincipal ultrafilter $D$ on the set $I$ and form the
ultraproduct $L = \prod_{r \in I} L_r\big{/}D$, as well as the vector
$x = (x_r)_{r\in I}\big{/}D \in L$ and the infinite positive hyperinteger
$\rho = (1,2,3,\dots)/D$. Then, by the virtue of the {\L}os Theorem (Lemma~\ref{Los}),
$L \sbs \sbR^n$ is an internal lattice satisfying $\lambda_1(L) \ge \lambda$.
For the same reason we have $x \in \sspn(L)$, $\lv u\,x\rv_{\sbZ} \le \delta$ for
every $u \in L \cap \rho\,\Bst$, as well as $(x + \eps\,\Bst) \cap L' = \emptyset$.
As $\FL = L \cap \FsR \sbs \rho\,\Bst$ and it is a subgroup of \,$L$, in view of
Lemma~\ref{delta-apr0} the second of the above three conditions implies that
$\lv u\,x\rv_{\sbZ} \apr 0$ for every $u \in \FL$.

As a consequence of Lemma~\ref{cov-rad-qual}, the covering radius
$\mu = \mu\bigl(L'\bigr)$ is a finite positive hyperreal. (In fact, Lemma~\ref{cov-rad}
and the {\L}os Theorem imply that $\mu \le n^{3/2}/(2\lambda)$, however, this is not
important for the moment.) Thus there is a $z \in L'$ such that $\LV x - z\RV \le \mu$.
Then $x - z \in \sspn(L)$ and
$$
u\,(x - z) - u\,x = -u\,z \in \sbZ\,,
$$
hence $\lv u\,(x - z)\rv_{\sbZ} = \lv u\,x\rv_{\sbZ}$ for any $u \in L$. At the same time,
$$
(x - z + \eps\,\Bst) \cap L' = (x + \eps\,\Bst) \cap L' = \emptyset\,.
$$
We can conclude, that $x' = x - z \in \sspn(L)$ is a finite vector satisfying
$\lv u\,x'\rv_{\sbZ} \apr 0$ for every finite $u \in L$, and $\LV x' - y\RV > \eps$
for any $y \in L'$. This, however, contradicts Theorem~\ref{stab-dual-latt-ns}.
\end{proof}

\begin{finrem}
Theorem~\ref{stab-dual-latt-ns} is rather robust in the sense that it does not explicitly
involve any norm on $\bR^n$ in its formulation. Moreover,
$$
\FsR^n = \{x \in \sbR^n: \LV x \RV < \infty\}
\quad\text{and}\quad
\IsR^n = \{x \in \sbR^n: \LV x \RV \apr 0\}
$$
for (the canonic extension to $\sbR^n$ of) any norm $\LV x\RV$ on $\bR^n$ and not just
for the euclidean one. As a consequence, Theorem~\ref{stab-dual-latt-st}, which is its
corollary, remains true even if \,$B$ denotes any centrally symmetric convex body in
$\bR^n$, $\lambda_1(L)$ is replaced by the first Minkowski successive minimum
$$
\lambda_1(C,L) = \min\bigl\{s \in \bR: s > 0,\ L \cap s\,C \ne \{0\}\bigr\}
$$
of another centrally symmetric convex body $C \sbs \bR^n$ with respect to $L$, and
$\LV x \RV$ is an arbitrary norm on $\bR^n$ (possibly without any direct relation
either to $B$ or to~$C$).
\end{finrem}

\end{document}